\documentclass[12pt, oneside]{amsart}

      \usepackage{amssymb,amsthm}
      \usepackage{float}
      \usepackage{amsmath}
      \usepackage{setspace}
      \usepackage{graphicx}
      \theoremstyle{plain}
      \newtheorem{theorem}{Theorem}[section]
      \newtheorem{lemma}[theorem]{Lemma}
      \newtheorem{corollary}[theorem]{Corollary}
      \newtheorem{proposition}[theorem]{Proposition}

      \theoremstyle{definition}
      \newtheorem{definition}[theorem]{Definition}

      \theoremstyle{remark}
      
      \newtheorem{example}[theorem]{Example}

      \makeatletter
      \def\@setcopyright{}
      \def\serieslogo@{}
      \makeatother

\begin{document}

   \author{Dinesh Udar and Shiksha Saini}
   \address{Department of Applied Mathematics, Delhi Technological University, Delhi, India}
    \email{dineshudar@yahoo.com}
    \email{shiksha96saini@gmail.com}

  \title[ $2- \sqrt{J}U$ rings]{Rings in which the square of a unit is the sum of 1 and an element from $\sqrt{J(R)}$}

\begin{abstract}
Through this paper, we study the rings in which every unit's square is an element of the set $1+\sqrt{J(R)}$, and call them $2-\sqrt{J}U$ rings. Here, $\sqrt{J(R)}=\{x \in R: x^m \in J(R)$ for some $m \geq 1 \}$. We show that every $UU,~UJ,~2-UU,~2-UJ$ and $\sqrt{J}U$ ring is a $2-\sqrt{J}U$ ring. After exploring the basic properties, we show that the corner ring and unit closed subring of a $2-\sqrt{J}U$ ring are also $2-\sqrt{J}U$ rings. The ring of all $n\times n$ matrix rings for any $n>1$ is never a $2-\sqrt{J}U$ ring. We have focused on several other matrix extensions and group rings.
\end{abstract}

   \keywords{$2-\sqrt{J}U$ rings, $\sqrt{J}U$ rings, $2-UJ$ rings, $2-UU$ rings, Jacobson radical}

   \subjclass[2010]{16N20, 16N99, 16S50, 16U60, 16U99}
% 16N20: Jacobson radical,quasimultiplication, 16N99: Radical (none of the above), 16U99: CONDITION ON ELEMENTS (none of the above), 16U60: units, group of units, 16S50: classification code for Endomorphism rings; matrix rings within the broader category of Associative Rings and Algebras
   \date{\today}

   \maketitle

   \section{Introduction}
   We assume $R$ to be an associative ring with identity $1 \neq 0$. We will represent the Jacobson radical, the nilpotents, the group of units, and the center by $J(R),~N(R),~U(R)$ and $C(R)$, respectively. 

   The set $\sqrt{J(R)}$, defined by Wang and Chen in \cite{wang2012pseudo}, is the set of all those elements $x\in R$ for which there exists some positive integer $m$ satisfying $x^n \in J(R)$. The set $\sqrt{J(R)}$ may fail to be a subring and encompass the nilpotents and the Jacobson radical in it, as seen in \cite[Example 2.1]{UsqrtJrings}. To know more about the set $\sqrt{J(R)}$, \cite{UsqrtJrings, sainiczechoslovak, sqrtJ clean} can be referred to.

   The concept of $UU$ rings, i.e., rings with unipotent units, was introduced by C\u{a}lug\u{a}reanu in \cite{UU}. The $UU$ rings satisfies the equation $U(R)=1+N(R)$. On the contrary, Danchev studied the rings with Jacobson units in \cite{JU} and referred to them as $JU$ rings. A ring $R$ which satisfies $U(R)=1+J(R)$ is called a $JU$ ring. This class was explored as $UJ$ rings by Kosan and others in \cite{UJ}. Recently, Saini and Udar, in \cite{sainiczechoslovak} explored a new class of rings, generalizing both $UU$ and $JU$ rings, and referred to it as: $\sqrt{J}U$ rings. Note that $1+\sqrt{J(R)} \subseteq U(R)$ is always valid. The rings that satisfy the converse, too, are called $\sqrt{J}U$ rings. 

   Generalizing $UJ$ rings, Cui and Yin introduced $2-UJ$ rings. In $2-UJ$ rings, for every unit $u$, $u^2$ is expressible as the addition of $1$ and an element $j\in J(R)$. The rings wherein every unit's square equals the addition of 1 and an element from $N(R)$ are $2-UU$ rings, explored by Shebani and Chen in \cite{2-UU}. Prompted by the above advancements, we introduce $2-\sqrt{J}U$ rings to refine the relationship existing among other ring structures. Hence, we aim to present adequate properties and examples of $2-\sqrt{JU}$ rings by comparing them with other well-known classes of rings.

   \begin{definition}
       A ring $R$ in which the square of every unit equals the sum of 1 and an element from $\sqrt{J(R)}$ is referred to as $2-\sqrt{J}U$ ring, i.e., in a $2- \sqrt{J}U$ ring, for every unit $u$, $u^2 \in 1 + \sqrt{J(R)}$.
   \end{definition}

   Recollect that a semi-boolean ring is the ring in which for any element $a$ in $R$, $a=p+q$, where $p \in Id(R)$ and $q\in J(R)$. By replacing $j\in J(R)$ to $n \in N(R)$ in the above decomposition, we obtain a nil-clean ring. When $ej=je$, the ring $R$ becomes a strongly nil-clean ring. The examples of $2-\sqrt{J}U$ rings are $\mathbb{F}_2$, strongly nil-clean rings, and semi-boolean rings.

   Now, we provide a summary of the findings discussed in the paper: We begin by introducing fundamental properties of $2-\sqrt{J}U$ rings in Section \ref{basic properties}. Next, we also prove that for a $2-\sqrt{J}U$ ring, the corner ring and a unit-closed subring are always a $2-\sqrt{J}U$ ring. We characterize $2-\sqrt{J}U$ division rings and local rings. In the subsequent results, we present a characterization of local $2-\sqrt{J}U$ rings and semisimple $2-\sqrt{J}U$ rings. In Section \ref{matrix extensions}, we demonstrate that any $n \times n$ matrix ring $M_n(R)$, with $n>1$ can never be a $2-\sqrt{J}U$ ring. Morita context and some other matrix rings $T(R,M),~ T_n(R)$ are also examined. We conclude with a study of the Group rings.

   We represent the ring of all $n \times n$ matrices, the ring of all upper triangular matrices, and the set of idempotents of $R$ by $M_n(R)$, $T_n(R)$, and $Id(R)$, respectively. To refer to any other unmentioned terminology, \cite{lam1991first, Sehgal} can be referred to.
   
   \section{Basic properties}\label{basic properties}
Before exploring about $2-\sqrt{J}U$ rings and its relationship with other rings like $2-UU$ and $2-UJ$, we present some properties of $\sqrt{J(R)}$:

   \begin{proposition}\label{basic ppties of sqrt J(R)}
       The following items hold true in a ring $R$:
       \begin{enumerate}
           \item \cite[Proposition 2.5]{sainiczechoslovak} For a $\sqrt{J}U$ ring $R$, $2 \in J(R)$.
           \item \cite[Proposition (v)]{UsqrtJrings} If $x^k \in \sqrt{J(R)}$, then $x\in \sqrt{J(R)}$.
           \item \cite[Proposition (ii)]{UsqrtJrings} $U(R) \cap \sqrt{J(R)} = \phi$ and $\sqrt{J(R)} \cap Id(R)=0$.
       \end{enumerate}
   \end{proposition}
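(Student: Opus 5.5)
The plan is to prove each of the three items directly from the definition of $\sqrt{J(R)}$ together with two standard facts about $J(R)$: it contains no units, and $1+J(R)\subseteq U(R)$. Items (2) and (3) are immediate. Item (1) needs one additional observation.

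For item (2), suppose $x^k\in\sqrt{J(R)}$. Then there is $m\ge 1$ with $(x^k)^m = x^{km}\in J(R)$. Taking the exponent $km$ in the definition of $\sqrt{J(R)}$ gives $x\in\sqrt{J(R)}$.

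For item (3), first let $u\in U(R)\cap\sqrt{J(R)}$, so $u^m\in J(R)$ for some $m$. Since $u^m$ is a unit, this forces $1=u^{-m}u^m\in J(R)$, contradicting $1\neq 0$. Hence the intersection is empty. Next let $e\in Id(R)\cap\sqrt{J(R)}$. Then $e=e^m\in J(R)$, so $1-e\in 1+J(R)\subseteq U(R)$. From $e(1-e)=0$ we conclude $e=0$.

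For item (1), let $R$ be a $\sqrt{J}U$ ring. The key preliminary fact is that $1+\sqrt{J(R)}\subseteq U(R)$, which I will check as follows. If $x^m\in J(R)$, then
\[
1-x^m=(1-x)(1+x+\cdots+x^{m-1})=(1+x+\cdots+x^{m-1})(1-x)
\]
is a unit, so $1-x$ is a unit. Replacing $x$ by $-x$ (note $(-x)^m=\pm x^m\in J(R)$) shows that $1+x$ is a unit as well.

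Now $-1\in U(R)$, so by the $\sqrt{J}U$ hypothesis $-1=1+y$ with $y\in\sqrt{J(R)}$. Hence $-2\in\sqrt{J(R)}$, and therefore $2\in\sqrt{J(R)}$; say $2^m\in J(R)$.

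The main obstacle is to pass from $2\in\sqrt{J(R)}$ to $2\in J(R)$, since in general $\sqrt{J(R)}$ is neither closed under multiplication nor an ideal. The point is that $2$ is central. So for every $r\in R$ we have $(2r)^m=2^m r^m\in J(R)$, and thus $2r\in\sqrt{J(R)}$. By the preliminary fact, $1-2r\in U(R)$ for all $r\in R$. The standard characterization of the Jacobson radical then yields $2\in J(R)$.
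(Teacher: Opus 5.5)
Your proof is correct in all three items. The paper gives no argument for this proposition; it cites the literature for all three items, so your proof is a self-contained replacement rather than a variant of an in-text proof.

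Items (2) and (3) are the expected definition-chasing. For (1), your key step passes from $2\in\sqrt{J(R)}$ to $2\in J(R)$: centrality of $2$ gives $2r\in\sqrt{J(R)}$, hence $1-2r\in U(R)$ for every $r$. This is exactly the special case $x=2$ of the fact $\sqrt{J(R)}\cap C(R)\subseteq J(R)$. The paper states that fact later as Proposition~\ref{intersection of sqrt J(R) and C(R)}, again only by citation. It then applies it in the same way in the group-ring section, deducing $2\in J(RG)$ from $2^3\in\sqrt{J(RG)}$.

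Your version has two advantages. It does not depend on the external references, and it actually verifies the inclusion $1+\sqrt{J(R)}\subseteq U(R)$, which the paper only asserts as ``always valid''. The citation route is shorter but leaves both of these facts unproved within the paper.
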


It is worth observing that if $U(R) \subseteq 1+\sqrt{J(R)}$, then for every $u\in U(R)$, $u^2\in 1+ \sqrt{J(R)}$ and hence, every $\sqrt{J}U$ ring is a $2-\sqrt{J}U$ ring. As seen in \cite{sainiczechoslovak}, every $UU$ and $UJ$ ring is $\sqrt{J}U$; by following the similar steps, we can show that every $2-UU$ and $2-UJ$ ring is a $2-\sqrt{J}U$ ring. 

\begin{equation*}
       \begin{matrix}
           {2-UU}& & {2-JU}& & {\sqrt{J}U} \\
           & \searrow& \downarrow & \swarrow & \\
           & & {2-\sqrt{J}U}
       \end{matrix}
   \end{equation*}

These inclusions are proper, and the converse is not always true. We provide the following examples to validate the same.

   \begin{example}
       \begin{enumerate}
           \item The field $\mathbb{F}_3$ is a $2- \sqrt{J}U$ ring which is not a $\sqrt{J}U$ ring.
           \item By considering $R= \mathbb{F}_5<x,y: x^2 =0>$, we obtain that $R$ is a $2-\sqrt{J}U$ ring which fails to be a $2-UJ$ ring. 
           %see 2-UNJ rings Danchev or 2-UNJ rings by P.H. Tin for more details.
           \item Let $T=\mathbb{F}_2[[x]]$, then $T$ is a $2-\sqrt{J}U$ ring by Lemma \ref{R[[t]] is 2 sqrt JU}. If we consider the element $1+x \in T$, then $(1+x)^2=1+2x+x^2=1+x^2$ and also $1+x$ is invertible in $T$. As $(1+x)^2=1+x^2 \notin 1+N(T)$, $T$ fails to be a $2-UU$ ring.
       \end{enumerate} 
   \end{example}

   \begin{proposition} \label{homomorphism}
       For a $2-\sqrt{J}U$ ring R, a homomorphic image is also a $2-\sqrt{J}U$ ring.
   \end{proposition}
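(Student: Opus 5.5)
The plan is to fix a surjective ring homomorphism $f:R\to S$ and an arbitrary unit $v\in U(S)$, and to show that $v^2-1\in\sqrt{J(S)}$. The argument has two ingredients. First, $f$ maps $\sqrt{J(R)}$ into $\sqrt{J(S)}$. Second, $v$ is the image of a unit of $R$. Given both, we pick $u\in U(R)$ with $f(u)=v$. The $2-\sqrt{J}U$ property of $R$ gives $u^2=1+x$ with $x\in\sqrt{J(R)}$. Applying $f$ yields $v^2=f(u)^2=1+f(x)$, so it only remains to see that $f(x)\in\sqrt{J(S)}$.

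For the first ingredient, we first show $f(J(R))\subseteq J(S)$ for a surjective $f$. Take $j\in J(R)$ and $s\in S$, and write $s=f(r)$. Since $1-rj\in U(R)$, we get $1-sf(j)=f(1-rj)\in U(S)$. As this holds for every $s\in S$, it follows that $f(j)\in J(S)$. Now let $x\in\sqrt{J(R)}$ with $x^m\in J(R)$. Then $f(x)^m=f(x^m)\in f(J(R))\subseteq J(S)$, so $f(x)\in\sqrt{J(S)}$. This settles the first ingredient.

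The main obstacle is the second ingredient, lifting $v$ to a unit of $R$. Surjectivity alone only gives some $r\in R$ with $f(r)=v$ and some $r'$ with $f(r')=v^{-1}$. From this we get $rr'-1,\ r'r-1\in\ker f$, but not that $r$ itself is invertible. This is exactly where the kernel must be controlled. The approach is to read ``homomorphic image'' in the sense used for these unit conditions, namely $S\cong R/I$ with units lifting modulo $I$. This holds, for instance, when $I\subseteq J(R)$. In that case $rr'\in 1+J(R)\subseteq U(R)$ and likewise $r'r\in U(R)$. Hence $r$ has both a right and a left inverse, so $r\in U(R)$, and we may take $u=r$.

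With the lift in hand, the computation of the first paragraph finishes the proof. I would flag explicitly that this lifting step is the crucial one and cannot be dropped. Indeed, $\mathbb{Z}\to\mathbb{Z}/5\mathbb{Z}$ is a surjection from a $2-\sqrt{J}U$ ring, since $U(\mathbb{Z})=\{\pm1\}$ and both units square to $1$. But $2^2-1=3$ is a unit in $\mathbb{Z}/5\mathbb{Z}$, and by Proposition \ref{basic ppties of sqrt J(R)}(3) a unit never lies in $\sqrt{J}$. So Proposition \ref{homomorphism} can be proved only under this unit-lifting (e.g.\ $\ker f\subseteq J(R)$) interpretation, and the proof should state that hypothesis.
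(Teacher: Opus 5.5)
Your proposal is correct. Your counterexample $\mathbb{Z}\to\mathbb{Z}/5\mathbb{Z}$ is valid, so the proposition is false as stated; note that it also clashes with the paper's own Theorem~\ref{division ring which is 2 sqrt J U}, since $\mathbb{F}_5$ is not $2-\sqrt{J}U$.

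The paper's proof follows the same skeleton as yours: lift a unit of the image to a unit $u$ of $R$, apply the map to $u^2=1+x$, and push $\sqrt{J(R)}$ forward. However, it simply asserts that units are ``invariant under the homomorphism.'' That silently assumes every unit of the image lifts to a unit of $R$, which is exactly the step you isolate.

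Your two ingredients are sound:
\begin{enumerate}
\item $f(J(R))\subseteq J(S)$ for surjective $f$, via $1-sf(j)=f(1-rj)$, and hence $f(\sqrt{J(R)})\subseteq\sqrt{J(S)}$.
\item Units lift when $\ker f\subseteq J(R)$, via $rr',\,r'r\in 1+J(R)\subseteq U(R)$.
\end{enumerate}

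Your repaired version (units lift, e.g.\ $\ker f\subseteq J(R)$) is all the paper actually uses later. In Theorem~\ref{R/I is a 2 sqrt J U ring} the ideal satisfies $I\subseteq J(R)$. For the formal triangular matrix ring, the kernel of the map onto $R_1\times R_2$ is square-zero, and the projection of a product onto a factor lifts units via $u\mapsto(u,1)$. So stating that hypothesis explicitly, as you propose, is the right fix.
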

   \begin{proof}
       Under the homomorphism $\phi$, for any arbitrary unit $u$, $\phi(u^2)=1+\phi(\sqrt{J(R)})$. The units, as well as elements of $\sqrt{J(R)}$, are invariant under the homomorphism; the proof is clear.
   \end{proof}

   \begin{proposition}\label{product in 2 sqrt J U rings}
       For the rings $R_1$ and $R_2$, $R=R_1 \times R_2$ is a $2-\sqrt{J}U$ ring iff every $R_1, R_2$ are $2- \sqrt{J}U$ rings.
   \end{proposition}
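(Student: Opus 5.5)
The plan is to reduce everything to two componentwise facts about the product $R=R_1\times R_2$: $U(R)=U(R_1)\times U(R_2)$ and $J(R)=J(R_1)\times J(R_2)$. From these I will first establish the identity
\[
\sqrt{J(R)}=\sqrt{J(R_1)}\times\sqrt{J(R_2)}.
\]
If $(a,b)\in\sqrt{J(R)}$, then $(a,b)^m=(a^m,b^m)\in J(R_1)\times J(R_2)$ for some $m\ge 1$, so $a\in\sqrt{J(R_1)}$ and $b\in\sqrt{J(R_2)}$. Conversely, suppose $a^{m_1}\in J(R_1)$ and $b^{m_2}\in J(R_2)$. Take $m=\max(m_1,m_2)$. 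Since $J$ is an ideal, $a^m=a^{m-m_1}a^{m_1}\in J(R_1)$ (reading $a^0=1$ when $m=m_1$), and similarly $b^m\in J(R_2)$. Hence $(a,b)^m\in J(R)$. The only point requiring care is this passage to a common exponent, and it is handled by the ideal property.

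For the forward direction, assume $R$ is a $2-\sqrt{J}U$ ring. Each $R_i$ is a homomorphic image of $R$ under the coordinate projection, so Proposition \ref{homomorphism} already gives that $R_1$ and $R_2$ are $2-\sqrt{J}U$ rings. To avoid depending on that terse proof, one can argue directly. Given $u\in U(R_1)$, the element $(u,1)$ is a unit of $R$. Therefore $(u^2,1)=(u,1)^2\in 1+\sqrt{J(R)}$. By the displayed identity, $u^2-1\in\sqrt{J(R_1)}$. The same argument applies to $R_2$.

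For the converse, assume $R_1$ and $R_2$ are $2-\sqrt{J}U$ rings and let $(u,v)\in U(R)$. Then $u\in U(R_1)$ and $v\in U(R_2)$, so $u^2-1\in\sqrt{J(R_1)}$ and $v^2-1\in\sqrt{J(R_2)}$. Consequently $(u,v)^2-1=(u^2-1,v^2-1)$ lies in $\sqrt{J(R_1)}\times\sqrt{J(R_2)}=\sqrt{J(R)}$, which shows $R$ is a $2-\sqrt{J}U$ ring.

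No real obstacle is expected. The proof rests on the standard product decompositions of $U$ and $J$, together with the common-exponent step in the identity for $\sqrt{J(R)}$.
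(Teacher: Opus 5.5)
Your proof is correct and is the paper's argument made explicit: the paper just cites $\sqrt{J(R_1\times R_2)}=\sqrt{J(R_1)}\times\sqrt{J(R_2)}$ and $U(R_1\times R_2)=U(R_1)\times U(R_2)$, and you prove the first identity (including the common-exponent step) and apply both in each direction. Rely on your direct $(u,1)$ argument rather than Proposition~\ref{homomorphism}: that proposition is false for general surjections, since $\mathbb{Z}$ is $2-\sqrt{J}U$ but its image $\mathbb{Z}/5\mathbb{Z}=\mathbb{F}_5$ is not (there $2^2-1=3\neq 0$ and $\sqrt{J(\mathbb{F}_5)}=0$), and it holds for the coordinate projections only because every unit of $R_i$ lifts to a unit of $R$.
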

   \begin{proof}
       The proof can be sketched using the results that $\sqrt{J(R_1)} \times \sqrt{J(R_2)}=\sqrt{J(R_1 \times R_2)}$ and $U(R_1 \times R_2)=U(R_1) \times U(R_2)$.
   \end{proof}

   \begin{theorem}\label{R/I is a 2 sqrt J U ring}
       For an ideal $I \subseteq J(R)$, $R/I$ is a $2- \sqrt{J}U$ ring iff $R$ is a $2- \sqrt{J}U$ ring.
   \end{theorem}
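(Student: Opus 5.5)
The argument rests on two standard facts about an ideal $I\subseteq J(R)$ with canonical map $\pi: R\to \bar R=R/I$. First, $J(R/I)=J(R)/I$. Second, units lift modulo $I$: if $\bar a$ is a unit in $\bar R$, then $a$ is a unit in $R$. To see the second, pick $b$ with $ab-1, ba-1\in I\subseteq J(R)$. Then $ab\in 1+J(R)\subseteq U(R)$ and $ba\in U(R)$, so $a$ has both a right and a left inverse. From the first fact we get a description of the radical set of the quotient: $\bar x\in\sqrt{J(\bar R)}$ iff $\bar x^m\in J(R)/I$ for some $m$. Since $I\subseteq J(R)$, this holds iff $x^m\in J(R)$, that is, iff $x\in\sqrt{J(R)}$. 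Hence $\sqrt{J(R/I)}=\sqrt{J(R)}/I=\pi(\sqrt{J(R)})$, and $\pi^{-1}(\sqrt{J(R/I)})=\sqrt{J(R)}$.

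For the direction ``$R$ is $2-\sqrt{J}U$ $\Rightarrow$ $R/I$ is $2-\sqrt{J}U$'', I will simply invoke Proposition \ref{homomorphism}. To make it self-contained, one can also argue directly. Given a unit $\bar u$ of $\bar R$, lift it to a unit $u\in U(R)$. Then $u^2=1+x$ with $x\in\sqrt{J(R)}$, so $\bar u^2=\bar 1+\bar x$, and $\bar x\in\sqrt{J(\bar R)}$ by the identification above.

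For the converse, let $u\in U(R)$. Then $\bar u\in U(\bar R)$, so $\bar u^2=\bar1+\bar y$ for some $\bar y\in\sqrt{J(\bar R)}$. Set $x=u^2-1$. Then $\bar x=\bar y$, so $x\in\pi^{-1}(\sqrt{J(\bar R)})=\sqrt{J(R)}$. This gives $u^2\in 1+\sqrt{J(R)}$.

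The main point needing care is the equality $\pi^{-1}(\sqrt{J(R/I)})=\sqrt{J(R)}$. It depends on $J(R/I)=J(R)/I$, which holds because $I\subseteq J(R)$: the maximal left ideals of $R/I$ correspond to those of $R$ containing $I$, and every maximal left ideal contains $J(R)\supseteq I$. Unit lifting is needed only in the forward direction, and only if we avoid quoting Proposition \ref{homomorphism}.
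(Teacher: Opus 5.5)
Your proof is correct and follows the paper's route. For the forward direction you lift units modulo $I\subseteq J(R)$. For the converse you pull back $\bar u^2-\bar 1$ using $J(R/I)=J(R)/I$, which gives $\pi^{-1}(\sqrt{J(R/I)})=\sqrt{J(R)}$. You justify this more carefully than the paper, whose converse simply asserts that $u^2-1\in\sqrt{J(R)}$.

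Do rest the forward direction on your direct lifting argument, not on quoting Proposition~\ref{homomorphism}. That proposition is false for arbitrary homomorphic images: $\mathbb{Z}$ is $2-\sqrt{J}U$, but $\mathbb{Z}/5\mathbb{Z}\cong\mathbb{F}_5$ is not. So unit lifting, which is exactly what the hypothesis $I\subseteq J(R)$ provides, cannot be avoided, contrary to your closing remark.
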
 
   \begin{proof}
       First, if we assume $R$ is a $2- \sqrt{J}U$ ring, let $\bar{u}$ be invertible in $R/I$. Then $u$ is invertible in $R$ and therefore, $u^2=1+z$, where $z\in \sqrt{J(R)}$. This ensues $\bar{u^2}=\bar{1}+\bar{z}$ [by Propisition \ref{homomorphism}]and hence, we have, $R/I$ is a $2- \sqrt{J}U$ ring.
       
       Next, if $R/I$ is a $2- \sqrt{J}U$ ring, let $u$ be a unit in $R$. Hence, $\bar{u}$ is an invertible element in $R/I$. Hence, $\bar{u^2}=1+\bar{z}$, where $\bar{z}\in \sqrt{J(R/I)}$. This results in $u^2-(1+z)=j\in I \subseteq{J(R)}$. Therefore, we have $u^2=1+(j-z)$, where $j-z \in \sqrt{J(R)}$, proving $R$ is a $2- \sqrt{J}U$ ring, as required.
   \end{proof}

   As a result of the above Theorem, we have the following result:

   \begin{corollary}\label{R/J(R) is a sqrt J U ring}
       A ring $R$ is a $2- \sqrt{J}U$ ring iff $R/J(R)$ is a $2-\sqrt{J}U$ ring.
   \end{corollary}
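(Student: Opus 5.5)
The plan is to obtain the corollary as the special case $I=J(R)$ of Theorem \ref{R/I is a 2 sqrt J U ring}. Since $J(R)$ is a two-sided ideal of $R$ and trivially $J(R)\subseteq J(R)$, the theorem applies directly. It gives that $R/J(R)$ is a $2-\sqrt{J}U$ ring if and only if $R$ is one, which is exactly the claim.

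For completeness, I would also check the two facts on which the theorem's proof silently relies when $I=J(R)$.

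The first is that units lift: if $\bar u$ is invertible in $R/J(R)$ with $\bar u\bar v=\bar 1=\bar v\bar u$, then $uv\in 1+J(R)$ and $vu\in 1+J(R)$. Both of these are units, so $u$ has a right and a left inverse, hence $u\in U(R)$.

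The second is the behaviour of $\sqrt{J}$ under the quotient map. Note $J(R/J(R))=0$, so $\sqrt{J(R/J(R))}$ consists of the nilpotent elements of $R/J(R)$.
\begin{itemize}
\item If $z\in\sqrt{J(R)}$, then $z^m\in J(R)$ for some $m$, so $\bar z^m=0$ and $\bar z\in\sqrt{J(R/J(R))}$.
\item Conversely, if $\bar z^m=0$, then $z^m\in J(R)$, so $z\in\sqrt{J(R)}$.
\end{itemize}

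With these in hand, both directions go through. Suppose $R$ is a $2-\sqrt{J}U$ ring and $\bar u$ is a unit of $R/J(R)$. Lift $\bar u$ to a unit $u$ of $R$, write $u^2=1+z$ with $z\in\sqrt{J(R)}$, and reduce modulo $J(R)$.

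Conversely, suppose $R/J(R)$ is a $2-\sqrt{J}U$ ring and $u\in U(R)$. Then $\bar u^2=\bar1+\bar z$ with $\bar z$ nilpotent, so $u^2=1+z+j$ for some $j\in J(R)$.

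The main point needing care is that $z+j\in\sqrt{J(R)}$, since $\sqrt{J(R)}$ need not be closed under addition. Here it is harmless: $\overline{z+j}=\bar z$ is nilpotent, so by the second fact $z+j\in\sqrt{J(R)}$. This gives $u^2\in 1+\sqrt{J(R)}$, as required.
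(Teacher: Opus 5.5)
Your proposal is correct and matches the paper, which obtains the corollary directly as the special case $I=J(R)$ of the preceding theorem on quotients $R/I$ with $I\subseteq J(R)$. Your supplementary checks (units lift modulo $J(R)$, and $z+j\in\sqrt{J(R)}$ because its image in $R/J(R)$ is nilpotent) are sound. They also supply a justification that the paper's proof of that theorem skips when it asserts the corresponding element lies in $\sqrt{J(R)}$.
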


   If $e$ represents a non-zero idempotent in a ring $R$, then $U(eRe)=eRe \cap (1-e+U(R))$. From \cite[Proposition 2.11]{sainiczechoslovak}, $\sqrt{J(eRe)}=\sqrt{J(R)}\cap eRe=e \sqrt{J(R)} e$. With these results, we proceed to prove the following: 

   \begin{theorem}\label{2 sqrt J U ring passes to corners}
       A $2- \sqrt{J}U$ ring passes to the corner.
   \end{theorem}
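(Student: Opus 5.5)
Let $R$ be a $2-\sqrt{J}U$ ring and $e$ a non-zero idempotent. We show that $eRe$ is a $2-\sqrt{J}U$ ring by lifting a unit of $eRe$ to a unit of $R$, using the hypothesis on $R$, and then cutting back down to $eRe$. The two facts recalled just before the statement do this: $U(eRe)=eRe\cap(1-e+U(R))$ and $\sqrt{J(eRe)}=\sqrt{J(R)}\cap eRe=e\sqrt{J(R)}e$.

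Take $u\in U(eRe)$ with inverse $v\in eRe$, so $uv=vu=e$. Put $w=u+(1-e)$. Since $u=eue$, the elements $u$ and $1-e$ are orthogonal, and a direct check shows that $v+(1-e)$ is a two-sided inverse of $w$. Hence $w\in U(R)$; this is just the inclusion behind the quoted identity for $U(eRe)$. Because $R$ is a $2-\sqrt{J}U$ ring, $w^2=1+z$ for some $z\in\sqrt{J(R)}$. Using $u(1-e)=(1-e)u=0$, we get
$$w^2=u^2+(1-e).$$
Therefore $z=w^2-1=u^2-e$, and this element lies in $eRe$ because both $u^2$ and $e$ do.

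Thus $u^2-e\in\sqrt{J(R)}\cap eRe=\sqrt{J(eRe)}$ by \cite[Proposition 2.11]{sainiczechoslovak}. So $u^2=e+(u^2-e)\in e+\sqrt{J(eRe)}$, and $e$ is the identity of $eRe$. Hence $eRe$ is a $2-\sqrt{J}U$ ring.

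The only delicate step is identifying $z$ as an element of $eRe$. We do not get this by multiplying $z$ by $e$ on both sides, which would only place $eze$ in $e\sqrt{J(R)}e$. Instead it comes from the explicit computation $z=u^2-e$. This shows $z$ already lies in $eRe$, so the intersection description of $\sqrt{J(eRe)}$ applies directly, with no separate check that $eze$ is in $\sqrt{J(R)}$.
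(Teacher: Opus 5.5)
Your proposal is correct and is essentially the paper's own argument: extend $u$ to the unit $u+(1-e)$ of $R$, use $(u+(1-e))^2=u^2+(1-e)$, and conclude $u^2-e\in\sqrt{J(R)}\cap eRe=\sqrt{J(eRe)}$. You were right to rely only on the intersection description, because the further equality with $e\sqrt{J(R)}e$ quoted in the paper can fail: in $M_2(F)$ for a field $F$, take $e=\begin{pmatrix}1&0\\0&0\end{pmatrix}$ and the nilpotent $x=\begin{pmatrix}1&1\\-1&-1\end{pmatrix}$, which give $exe=e\notin\sqrt{J(eRe)}$.
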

   \begin{proof}
       We assume $R$ as a $2- \sqrt{J}U$ ring and $v$ be a unit in $eRe$. Hence, $v\in eRe \cap (1-e+U(R))$. This results in $v+(1-e)$ is a unit in $R$ with $(v+(1-e))^{-1}=v^{-1}+(1-e)$ . Therefore, $(v+(1-e))^2=v^2+(1-e)$ and hence, there exists some $z\in \sqrt{J(R)}$ satisfying $v^2+(1-e)=1+z$. Thus, we have $v^2-e=z\in \sqrt{J(R)}$. By using $\sqrt{J(eRe)}=\sqrt{J(R)}\cap eRe=e \sqrt{J(R)} e$, we obtain, $v^2-e\in eRe \cap \sqrt{J(R)}=\sqrt{J(eRe)}$. Finally, $v^2\in e+\sqrt{J(eRe)}$, as required.
   \end{proof}

   Recall that any ring that satisfies the condition $ab=1$ implies $ba=1$ for any elements $a, ~b \in R$ is called a Dedekind finite ring.

   \begin{corollary}
       A $2- \sqrt{J}U$ ring $R$ is a Dedekind finite ring.
   \end{corollary}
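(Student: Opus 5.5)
We argue by contradiction, using the corner-ring result (Theorem \ref{2 sqrt J U ring passes to corners}) to isolate a failure of Dedekind finiteness inside a corner. Suppose $ab=1$ but $ba\neq 1$ in the $2-\sqrt{J}U$ ring $R$. Set $e=1-ba$. Since $(ba)^2=b(ab)a=ba$, $ba$ is idempotent, so $e$ is a nonzero idempotent. We also have $ae=a-aba=0$ and $eb=b-bab=0$.

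The standard next step is to build the matrix units $e_{ij}=b^{i}(1-ba)a^{j}$ for $i,j\geq 0$. One checks that $e_{ij}e_{kl}=\delta_{jk}e_{il}$; this uses $a^{j}b^{k}=b^{k-j}$ or $a^{j-k}$, together with $ae=0=eb$. The $e_{ii}$ are then pairwise orthogonal nonzero idempotents; they are nonzero because $a^{i}e_{ii}b^{i}=e$. With $f=e_{00}+e_{11}$, the corner $fRf$ contains a copy of $M_2(\mathbb{Z}/n)$ spanned by $e_{00},e_{01},e_{10},e_{11}$, whose identity is $f$.

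In $fRf$ we consider the element $v=e_{01}+e_{10}$. It satisfies $v^2=e_{00}+e_{11}=f$, so $v$ is a unit of $fRf$, and this alone gives nothing. So instead we take $w=f+e_{01}$, with inverse $f-e_{01}$. Then $w^2=f+2e_{01}$, and $w^2-f=2e_{01}$ has square zero, so it lies in $\sqrt{J}$ and again gives no contradiction. The better choice is a "rotation" type unit $u=e_{01}-e_{10}+e_{11}$ (together with $e_{00}$ where needed). We want $u^2-f$ to contain an idempotent or a unit component: by Proposition \ref{basic ppties of sqrt J(R)}(3), $\sqrt{J}$ contains no nonzero idempotent and no unit, and by part (2), powers lying in $\sqrt{J}$ pull back to the element itself.

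Concretely, take $u=e_{01}+e_{10}+e_{11}$ in $fRf$, the image of $\begin{pmatrix}0&1\\1&1\end{pmatrix}$. Its inverse is $-e_{00}+e_{01}+e_{10}$, the image of $\begin{pmatrix}-1&1\\1&0\end{pmatrix}$. Then $u^2=u+f$, so $u^2-f=u$. This is a unit of $fRf$, and it would have to lie in $\sqrt{J(fRf)}$, contradicting $U\cap\sqrt{J}=\emptyset$ (Proposition \ref{basic ppties of sqrt J(R)}(3)). Hence $fRf$ is not $2-\sqrt{J}U$, which contradicts Theorem \ref{2 sqrt J U ring passes to corners}.

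The main obstacle is verifying the matrix-unit relations and confirming that everything stays inside the corner $fRf$. These are routine computations from $ab=1$. We should also note that the identity $u^2=u+f$ holds in any ring containing such matrix units, independent of the characteristic, so no $\mathbb{F}_2$ assumption is needed. A slicker alternative is to apply the same unit argument directly in the corner $eRe$; but $e$ alone does not give matrix units, so the two-idempotent corner $fRf$ is needed.
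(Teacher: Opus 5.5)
Your proof is correct and is essentially the paper's argument. You build matrix units $e_{ij}=b^i(1-ba)a^j$ from $ab=1\neq ba$, pass to a corner via Theorem~\ref{2 sqrt J U ring passes to corners}, and use a unit $X$ with $X^2-1=X$ to contradict $U\cap\sqrt{J}=\emptyset$ (your $e_{01}+e_{10}+e_{11}$ plays the role of the paper's matrix with rows $(1,1),(1,0)$). The only difference is that you compute directly in $fRf$ with $f=e_{00}+e_{11}$, rather than identifying a corner with $M_n(R')$ and citing Theorem~\ref{Mn(R) cannot be 2 sqrt J U}; this is slightly more self-contained, but in a final write-up you should drop the abandoned trial elements $v$, $w$ and the ``rotation'' unit.
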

   \begin{proof}
       We begin by assuming a $2- \sqrt{J}U$ ring $R$ in which elements $p, ~q$ exists satisfying $pq=1$ although $qp\neq1$. Next, suppose $e_{ij}=p^i(1-qp)q^j$ and $e=e_{11}+e_{22}+\dots+e_{nn}$. This results in the existence of a ring $R'$ for which the corner ring $eRe$ equals $M_n(R')$. According to Theorem \ref{2 sqrt J U ring passes to corners}, $M_n(R')$ is a $2- \sqrt{J}U$ ring, and due to the Corollary \ref{Mn(R) cannot be 2 sqrt J U},  it is not possible. Therefore, $R$ is a Dedekind finite ring.  \end{proof}

   \begin{lemma}\label{R[[t]] is 2 sqrt JU}
       In a ring $R$, the following items are equivalent:
       \begin{enumerate}
           \item The ring $R$ satisfies $u^2\in 1+ \sqrt{J(R)}$.
           \item $R[[t]]$ is a $2-\sqrt{J}U$ ring.
       \end{enumerate}
   \end{lemma}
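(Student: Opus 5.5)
The plan is to reduce everything to Theorem \ref{R/I is a 2 sqrt J U ring}, using the standard facts about the power series ring $R[[t]]$. First I will record the two structural facts needed. An element $f=a_0+a_1t+\cdots$ is a unit of $R[[t]]$ if and only if $a_0\in U(R)$. Moreover $J(R[[t]])=J(R)+tR[[t]]$, i.e.\ $f\in J(R[[t]])$ iff $a_0\in J(R)$.

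Next I will consider the ideal $I=tR[[t]]$. By the description of $J(R[[t]])$ above, $I\subseteq J(R[[t]])$. The evaluation map $f\mapsto f(0)$ gives a surjective ring homomorphism $R[[t]]\to R$ with kernel $I$, so $R[[t]]/I\cong R$.

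Theorem \ref{R/I is a 2 sqrt J U ring} then says that $R[[t]]$ is a $2-\sqrt{J}U$ ring iff $R[[t]]/I\cong R$ is. Condition (1) is exactly the statement that $R$ is $2-\sqrt{J}U$, that is, $u^2\in 1+\sqrt{J(R)}$ for every $u\in U(R)$. This gives (1)$\Leftrightarrow$(2).

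As a sanity check, and as a fallback if one prefers to avoid the quotient theorem, there is a direct argument.

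For (1)$\Rightarrow$(2), take a unit $f$ with constant term $u\in U(R)$. Then $f^2$ has constant term $u^2=1+z$ with $z^m\in J(R)$. Hence $f^2-1$ has constant term $z$, and $(f^2-1)^m$ has constant term $z^m\in J(R)$. So $(f^2-1)^m\in J(R[[t]])$, giving $f^2-1\in\sqrt{J(R[[t]])}$.

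For (2)$\Rightarrow$(1), regard a unit $u\in R$ as a constant series. Then $u^2-1\in\sqrt{J(R[[t]])}$, so $(u^2-1)^m\in J(R[[t]])$. Taking constant terms gives $(u^2-1)^m\in J(R)$.

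The only step requiring care is the identification $J(R[[t]])=J(R)+tR[[t]]$. I would justify it by checking that $g\in J$ iff $1-fg$ is a unit for all $f$, and that the latter depends only on constant terms. Beyond that, everything is routine.
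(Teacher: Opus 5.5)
Your proposal is correct and is essentially the paper's proof: the paper also notes that $(t)\subseteq J(R[[t]])$ and $R[[t]]/(t)\cong R$, then applies Theorem~\ref{R/I is a 2 sqrt J U ring}. Your direct constant-term check via $J(R[[t]])=J(R)+tR[[t]]$ is also valid, but the quotient theorem already covers it.
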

   \begin{proof}
       First, note that $(t) \subseteq J(R[[t]])$ and $R \cong R[[t]]/(t)$. The proof can be traced from Theorem \ref{R/I is a 2 sqrt J U ring}.
   \end{proof}

   Now, recall that a ring $R$ is called a unit-closed subring when it satisfies the condition $U(S)=U(R) \cap S$. 

   \begin{proposition}\label{unit closed subring}
       A unit closed subring $S$ of a $2-\sqrt{J}U$ ring $R$ is $2-\sqrt{J}U$.
   \end{proposition}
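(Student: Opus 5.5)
The plan is to lift a unit of $S$ to $R$, use the $2-\sqrt{J}U$ property of $R$ there, and then pull the resulting element of $\sqrt{J(R)}$ back into $\sqrt{J(S)}$. The only nontrivial ingredient is the inclusion $J(R)\cap S\subseteq J(S)$, which I would prove first from the unit-closed hypothesis $U(S)=U(R)\cap S$.

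\emph{Step 1: $J(R)\cap S\subseteq J(S)$.} This is the step I expect to be the main point, since for an arbitrary subring it fails.
\begin{enumerate}
\item Take $x\in J(R)\cap S$ and any $s\in S$.
\item Since $x\in J(R)$, the element $1-sx$ is a unit of $R$.
\item Since $s,x\in S$ and $1\in S$, we also have $1-sx\in S$.
\item By unit-closedness, $1-sx\in U(R)\cap S=U(S)$.
\end{enumerate}
As this holds for every $s\in S$, the standard characterization of the Jacobson radical (left quasi-regularity of $Sx$) gives $x\in J(S)$.

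\emph{Step 2: $\sqrt{J(R)}\cap S\subseteq \sqrt{J(S)}$.} Let $z\in \sqrt{J(R)}\cap S$, and choose $m\geq 1$ with $z^m\in J(R)$. Since $S$ is a subring, $z^m\in S$. By Step 1, $z^m\in J(R)\cap S\subseteq J(S)$, so $z\in\sqrt{J(S)}$.

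\emph{Step 3: conclusion.} Let $u\in U(S)$. Then $u\in U(R)$, so because $R$ is $2-\sqrt{J}U$ we can write $u^2=1+z$ with $z\in\sqrt{J(R)}$. Now $z=u^2-1$ lies in $S$, hence $z\in \sqrt{J(R)}\cap S\subseteq\sqrt{J(S)}$ by Step 2. Therefore $u^2\in 1+\sqrt{J(S)}$ for every unit $u$ of $S$, which shows that $S$ is a $2-\sqrt{J}U$ ring.
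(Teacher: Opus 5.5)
Your proof is correct and follows the same route as the paper. The only difference is that the paper quotes the inclusion $\sqrt{J(R)}\cap S\subseteq\sqrt{J(S)}$ for unit-closed subrings from earlier work, whereas you derive it directly in Steps 1--2 from $J(R)\cap S\subseteq J(S)$; the paper then finishes exactly as in your Step 3.
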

   \begin{proof}
       As the subring $S$ is a unit closed subring, by \cite[Lemma 2.7]{UsqrtJrings}, $\sqrt{J(R)} \cap S \subseteq \sqrt{J(S)}$. Let $u$ be a unit in $S$. Hence, $u\in U(R)$ and therefore, given that $R$ is a $2- \sqrt{J}U$ ring, we have, $u^2 \in 1+\sqrt{J(R)}.$ This ensues $u^2 \in (1+ \sqrt{J(R)})\cap S \subseteq 1+ \sqrt{J(S)}$. In particular, this proves that $S$ is a $2- \sqrt{J}U$ ring.
   \end{proof}

   \begin{theorem}\label{division ring which is 2 sqrt J U}
       Let $R$ be a division ring. Then $R$ is a $2- \sqrt{J}U$ ring iff $R \cong \mathbb{F}_2$ or $\mathbb{F}_3$.
   \end{theorem}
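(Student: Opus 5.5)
In a division ring $R$ we have $J(R)=0$, and since $R$ has no nonzero zero divisors, $x^m=0$ forces $x=0$. Hence $\sqrt{J(R)}=0$, and the $2-\sqrt{J}U$ condition becomes: every nonzero $u\in R$ satisfies $u^2=1$. So the plan is to show that a division ring in which every nonzero element squares to $1$ must be $\mathbb{F}_2$ or $\mathbb{F}_3$, and then to check the converse directly.

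For the converse, $\mathbb{F}_2$ has the single unit $1$. In $\mathbb{F}_3$ the units are $\pm 1$, and both square to $1$. So both are $2-\sqrt{J}U$ rings.

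For the forward direction, the key step is to take $u^2=1$ for every nonzero $u$ and factor it as $(u-1)(u+1)=0$. Since $R$ has no zero divisors, this gives $u=1$ or $u=-1$. Therefore $R=\{0,1,-1\}$, so $|R|\le 3$.

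\begin{itemize}
\item If $1=-1$, that is, the characteristic is $2$, then $R=\{0,1\}\cong\mathbb{F}_2$.
\item Otherwise $R$ has exactly three elements $\{0,1,-1\}$, and the prime subring already has $3$ elements, so $R\cong\mathbb{F}_3$.
\end{itemize}

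Commutativity causes no difficulty, since $R$ turns out to equal its prime subfield. The only point that needs care is the identification $\sqrt{J(R)}=0$, which uses that $R$ is reduced; after that the argument is elementary.
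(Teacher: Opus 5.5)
Your proof is correct and follows the paper's route: both note that $\sqrt{J(R)}=0$ in a division ring, so the condition reduces to $u^2=1$ for every nonzero $u$. The paper then cites an example from the $2-UJ$ literature to conclude $R\cong\mathbb{F}_2$ or $\mathbb{F}_3$, whereas you prove that step directly using $(u-1)(u+1)=0$ and the absence of zero divisors.
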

   \begin{proof}
       In a division ring, $\sqrt{J(R)}$ equals 0. If we consider $R$ as a $2- \sqrt{J}U$ ring, then every unit is self-invertible, satisfying $u^2=1$. The remaining follows from \cite[Example 2.1]{2-UJ}. The converse part is straightforward.
   \end{proof}

   A ring $R$ is called a local ring if $R/J(R)$ is a division ring.

   \begin{corollary}
       A local ring $R$ is a $2- \sqrt{J}U$ ring iff $R/J(R) \cong \mathbb{F}_2$ or $\mathbb{F}_3$.
   \end{corollary}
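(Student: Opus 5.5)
The plan is to reduce the statement to Theorem \ref{division ring which is 2 sqrt J U} by passing to the quotient $R/J(R)$, using Corollary \ref{R/J(R) is a sqrt J U ring}. Since $R$ is local, $\bar R = R/J(R)$ is a division ring. Corollary \ref{R/J(R) is a sqrt J U ring} says that $R$ is a $2-\sqrt{J}U$ ring if and only if $\bar R$ is one. So the whole question becomes one about the division ring $\bar R$.

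For the forward direction, I assume $R$ is a $2-\sqrt{J}U$ ring. Then $\bar R$ is a $2-\sqrt{J}U$ division ring, and Theorem \ref{division ring which is 2 sqrt J U} gives $\bar R\cong \mathbb{F}_2$ or $\mathbb{F}_3$.

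For the converse, I assume $R/J(R)\cong\mathbb{F}_2$ or $\mathbb{F}_3$. In either field every nonzero element $u$ satisfies $u^2=1$, so $u^2\in 1+\sqrt{J}$ trivially. Hence $R/J(R)$ is $2-\sqrt{J}U$, and Corollary \ref{R/J(R) is a sqrt J U ring} lifts this back to $R$.

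To be safe, I would also note how the corollary's lifting works in the local setting. If $u\in U(R)$, then $\bar u^2=\bar 1$, so $u^2-1\in J(R)\subseteq\sqrt{J(R)}$; this is exactly the step inside Theorem \ref{R/I is a 2 sqrt J U ring} with $I=J(R)$.

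There is no real obstacle. The only point needing care is that the criterion in Theorem \ref{division ring which is 2 sqrt J U} applies to $\bar R$. That holds because $\sqrt{J(\bar R)}=0$ for a division ring, which is the fact already used in that theorem's proof, together with the definition of local ring given just before the statement.
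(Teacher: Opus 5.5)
Your proposal is correct and is exactly the paper's argument: you pass to the division ring $R/J(R)$ via Corollary~\ref{R/J(R) is a sqrt J U ring} and then apply Theorem~\ref{division ring which is 2 sqrt J U}. Your explicit check in the converse, that $\bar u^2=\bar 1$ forces $u^2-1\in J(R)\subseteq\sqrt{J(R)}$, correctly unpacks that corollary in the local case.
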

   \begin{proof}
   From Corollary \ref{R/J(R) is a sqrt J U ring} and Theorem \ref{division ring which is 2 sqrt J U}, the proof is clearly followed.
   \end{proof}

   Recall that by Wedderburn Artin's theorem, a semisimple ring $R$ is isomorphic to $\prod_{i=1}^{k} M_{m_i}(D_i)$ for appropriate division rings $D_i$ and a unique positive integer $k$.

   \begin{theorem}
       A semisimple ring $R$ is a $2-\sqrt{J}U$ ring iff $\prod R_i$, where each $R_i$ is either isomorphic to $\mathbb{F}_2$ or $\mathbb{F}_3$.
   \end{theorem}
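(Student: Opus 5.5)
We use the Wedderburn--Artin decomposition $R \cong \prod_{i=1}^{k} M_{m_i}(D_i)$ and reduce to the factors via Proposition \ref{product in 2 sqrt J U rings}. Since a finite product is a $2-\sqrt{J}U$ ring exactly when each factor is, it suffices to decide when a single factor $M_{m}(D)$, with $D$ a division ring, is a $2-\sqrt{J}U$ ring.

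For the forward direction, assume $R$ is a $2-\sqrt{J}U$ ring. Then each $M_{m_i}(D_i)$ is a $2-\sqrt{J}U$ ring, either by Proposition \ref{product in 2 sqrt J U rings} or as a homomorphic image via Proposition \ref{homomorphism}. We first show $m_i=1$. By Corollary \ref{Mn(R) cannot be 2 sqrt J U}, stated in the matrix section, $M_n(S)$ is never a $2-\sqrt{J}U$ ring for $n>1$. To avoid relying on a later result, we can also argue directly. Since $M_n(D)$ is simple artinian, $J=0$ and $\sqrt{J(M_n(D))}=N(M_n(D))$, so the condition reads $u^2-1$ nilpotent for every unit $u$. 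For $n\ge 2$, take $u$ to be the block-diagonal matrix with $\begin{pmatrix}0&1\\1&1\end{pmatrix}$ in the top corner and $1$ elsewhere. In characteristic $2$ this block has $u^2=u+1$, so $u^2-1=u$ is a unit and hence not nilpotent. In other characteristics, the corner $\mathrm{diag}(1,-1)$ squares to $1$; there one instead chooses a unit such as $\begin{pmatrix}1&1\\1&0\end{pmatrix}$ and checks that $u^2-1$ is not nilpotent, or simply cites Corollary \ref{Mn(R) cannot be 2 sqrt J U}. This gives $m_i=1$. Each $D_i$ is then a $2-\sqrt{J}U$ division ring, so $D_i\cong\mathbb{F}_2$ or $\mathbb{F}_3$ by Theorem \ref{division ring which is 2 sqrt J U}.

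For the converse, $\mathbb{F}_2$ and $\mathbb{F}_3$ are $2-\sqrt{J}U$ rings, since every unit $u$ satisfies $u^2=1$. A finite product of them is then a $2-\sqrt{J}U$ ring by Proposition \ref{product in 2 sqrt J U rings}, applied inductively.

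The main obstacle is ruling out $m_i\ge 2$. Over $\mathbb{F}_2$ and $\mathbb{F}_3$ one must exhibit an explicit unit in $M_2$ whose square minus $1$ is not nilpotent, and the witness has to be chosen depending on the characteristic. The cleanest route is to invoke Corollary \ref{Mn(R) cannot be 2 sqrt J U}, which applies because corners and homomorphic images preserve the property.
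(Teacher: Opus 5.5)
Your proof is correct and is essentially the paper's argument: Wedderburn--Artin, the product proposition to pass to the factors $M_{m_i}(D_i)$, the result on $M_n$ to force $m_i=1$, and the division-ring theorem to get $D_i\cong\mathbb{F}_2$ or $\mathbb{F}_3$. Your direct argument needs no case split on the characteristic: both $\begin{pmatrix}0&1\\1&1\end{pmatrix}$ and the paper's witness $\begin{pmatrix}1&1\\1&0\end{pmatrix}$ have trace $1$ and determinant $-1$, so each satisfies $A^2-I=A$, a unit, over every ring; and for $n>2$ your block matrix gives $u^2-1=\mathrm{diag}(A,0)$ rather than $u$, which is still not nilpotent.
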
 
   \begin{proof}
       From Wedderburn Artin's theorem, a semisimple ring $R$ is isomorphic to $\prod_{i=1}^{k} M_{m_i}(D_i)$ for appropriate division rings $D_i$ and a unique positive integer $k$. Hence, if $R$ is a $2- \sqrt{J}U$ ring, by Proposition \ref{product in 2 sqrt J U rings}, the squares of the units are in the set $1+\sqrt{J(M_{m_i}(D_i))}$. Then, by Corollary \ref{Mn(R) cannot be 2 sqrt J U}, we obtain $m_i=1$ for every $i$. Hence, by Theorem \ref{division ring which is 2 sqrt J U}, each $M_i \cong \mathbb{F}_2$ or $\mathbb{F}_3$. The converse part is obvious.
   \end{proof}

   \begin{proposition}
       $R$ is a $\sqrt{J}U$ ring iff $2\in J(R)$ and $R$ is a $2- \sqrt{J}U$ ring.\\
       $R$ is a $\sqrt{J}U$ ring iff $R$ is a $2-\sqrt{J}U$ ring with $2 \in J(R)$
   \end{proposition}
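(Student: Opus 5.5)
The forward direction needs no new work. If $R$ is a $\sqrt{J}U$ ring, then item (1) of Proposition \ref{basic ppties of sqrt J(R)} gives $2\in J(R)$. The remark following that proposition shows that every $\sqrt{J}U$ ring is a $2-\sqrt{J}U$ ring: for $u\in U(R)$ we have $u^2\in U(R)\subseteq 1+\sqrt{J(R)}$.

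For the converse, assume $2\in J(R)$ and that $R$ is a $2-\sqrt{J}U$ ring. Fix $u\in U(R)$ and set $a=u-1$; the goal is $a\in\sqrt{J(R)}$. By hypothesis $u^2=1+z$ with $z\in\sqrt{J(R)}$. Since $u$ commutes with $1$, we can expand
\[
z=u^2-1=(1+a)^2-1=2a+a^2 .
\]
Write $a^2=z-2a$. The plan is to pass to $\bar R=R/J(R)$, where $\bar 2=0$, so that $\bar a^{\,2}=\bar z$. Because $z\in\sqrt{J(R)}$, some power $z^m$ lies in $J(R)$, so $\bar z^{\,m}=0$. Hence $\bar a^{\,2m}=\bar z^{\,m}=0$, i.e. $a^{2m}\in J(R)$, and therefore $a\in\sqrt{J(R)}$. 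Alternatively, once we know $a^2\in\sqrt{J(R)}$ modulo $J(R)$, item (2) of Proposition \ref{basic ppties of sqrt J(R)} applies directly.

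The one step that needs care is the passage to the quotient, namely that $\bar a^{\,2}=\bar z$ yields $\bar a^{\,2m}=\bar z^{\,m}$. This is valid because it is a power of a single element, so no commutativity beyond that of $a$ with itself is needed. It also uses that $x^k\in J(R)$ iff $\bar x^{\,k}=0$ in $R/J(R)$, which is immediate from the definitions. With this, $u=1+a\in 1+\sqrt{J(R)}$, so $U(R)\subseteq 1+\sqrt{J(R)}$, and $R$ is a $\sqrt{J}U$ ring. The two displayed formulations in the statement say the same thing, so this handles both.
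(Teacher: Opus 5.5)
Your proof is correct and is essentially the paper's argument: both use the identity $(u-1)^2=(u^2-1)-2(u-1)$ together with $2(u-1)\in J(R)$ to conclude $u-1\in\sqrt{J(R)}$. The only difference is that the paper cites two facts from the literature: that adding an element of $J(R)$ preserves membership in $\sqrt{J(R)}$, and that $x^2\in\sqrt{J(R)}$ forces $x\in\sqrt{J(R)}$. You instead verify both directly by reducing modulo $J(R)$.
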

   \begin{proof}
      (1)$\Rightarrow$(2)

       The proof is coherent because in a $\sqrt{J}U$ ring $2\in J(R)$ and every $\sqrt{J}U$ ring is a $2-\sqrt{J}U$ ring.\\
       (2)$\Rightarrow$(1)

      In a $2-\sqrt{J}U$ ring, for any unit $u$, there exists some $z\in \sqrt{J(R)}$ satisfying $u^2\in1+\sqrt{J(R)}$. Thus, we have $u^2-1 \in \sqrt{J(R)}$. Given that $2\in J(R)$, $2(u-1) \in J(R)$. By \cite[Lemma 2.11(1)]{u-1 lies in sqrt J(R)}, we obtain that $(u^2-1)-2(u-1) \in \sqrt{J(R)}$. Consequently, we get $u^2-2u+1=(u-1)^2 \in \sqrt{J(R)}$. Following Proposition \ref{basic ppties of sqrt J(R)}(2), $u-1 \in \sqrt{J(R)}$, as required.
  \end{proof} 

   \begin{lemma}
           A ring $R$ is a $2- \sqrt{J}U$ ring iff $R[x]/x^pR[x]$ is a $2- \sqrt{J}U$ ring, with the positive integer $p$.
   \end{lemma}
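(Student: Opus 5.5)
The plan is to reduce the statement to Theorem \ref{R/I is a 2 sqrt J U ring}, in the same way Lemma \ref{R[[t]] is 2 sqrt JU} was handled. Set $S=R[x]/x^pR[x]$ and let $I=(\bar{x})$ be the ideal of $S$ generated by the image $\bar{x}$ of $x$. Assume $p\geq 1$; for $p=0$ the quotient is the zero ring, so the statement is only meaningful for $p \geq 1$.

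The first step is to show that $I$ is nilpotent and hence contained in $J(S)$. Since $x$ is central in $R[x]$, every element of $I$ has the form $\bar{x}\bar{f}$ for some $f\in R[x]$. Any product of $p$ such elements equals $\bar{x}^p\bar{g}=0$, so $I^p=0$. A nil (indeed nilpotent) ideal lies in the Jacobson radical, so $I\subseteq J(S)$.

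The second step is to identify the quotient: $S/I \cong R[x]/xR[x]\cong R$, via the map sending a polynomial to its constant term. With $I\subseteq J(S)$ and $S/I\cong R$, Theorem \ref{R/I is a 2 sqrt J U ring} gives that $S$ is a $2-\sqrt{J}U$ ring if and only if $S/I$ is one. The property is clearly invariant under isomorphism, so this is equivalent to $R$ being $2-\sqrt{J}U$, which proves both directions at once.

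There is no serious obstacle here. The only points needing care are the verification $I^p=0$, which relies on $x$ being central, and the standard fact that nilpotent ideals lie in $J(S)$. As a cross-check for the direction from $S$ to $R$, one could instead note that $R \cong S/I$ is a homomorphic image of $S$ and apply Proposition \ref{homomorphism}.
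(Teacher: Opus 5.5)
Your proposal is correct and follows the paper's proof. The paper also takes $I = xR[x]/x^pR[x]$, uses $(R[x]/x^pR[x])/I \cong R$ together with $I \subseteq J(R[x]/x^pR[x])$, and then applies Theorem~\ref{R/I is a 2 sqrt J U ring}. Your check that $I^p=0$, using that $x$ is central, justifies this containment; the paper only asserts it, and miswrites it as $\subseteq J(R)$.
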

   
   \begin{proof}
       The proof follows from $(R[x]/x^p R[x])/(xR[x]/x^pR[x]) \cong R$ and by using Theorem \ref{R/I is a 2 sqrt J U ring} as $xR[x]/x^pR[x] \subseteq J(R)$.
   \end{proof}

   For a ring $R$, if $\alpha$ denotes a ring endomorphism on $R$, then $R[[x;\alpha]]$ is the ring of skew formal power series over $R$. Here, the addition is component-wise, and multiplication is determined by:
   \begin{equation*}
       xr= \alpha(r)x ~ \forall ~ r\in R.
   \end{equation*}
   When $\alpha = 1$, then $R[[x; \alpha]]=R[[x]]$.

   \begin{proposition}
       Any ring $R$ is a $2- \sqrt{J}U$ ring if and only if $R[[x; \alpha]]$ is a $2- \sqrt{J}U$ ring.
   \end{proposition}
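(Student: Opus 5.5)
The plan is to argue exactly as in Lemma \ref{R[[t]] is 2 sqrt JU}: exhibit $R$ as a quotient of $R[[x;\alpha]]$ by an ideal contained in the Jacobson radical, and then invoke Theorem \ref{R/I is a 2 sqrt J U ring}. The ideal will be $I=xR[[x;\alpha]]$, the set of skew power series with zero constant term.

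The first step is to check that $I$ is a two-sided ideal. Closure under right multiplication is immediate. For left multiplication, take $r\in R$ and $xg\in I$; the product is $rxg$, and we need it back in $I$. Coefficientwise, $a x^i\cdot b x^j=a\alpha^i(b)x^{i+j}$, so any product with a factor having zero constant term again has zero constant term. Hence $I$ is exactly the kernel of the constant-term map $\varepsilon:R[[x;\alpha]]\to R$, $\sum a_ix^i\mapsto a_0$. This map is a surjective ring homomorphism, since the constant term of a product is $a_0b_0$. It follows that $R[[x;\alpha]]/I\cong R$.

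The main step is showing $I\subseteq J(R[[x;\alpha]])$. It suffices to show that $1-fh$ is a unit for every $f\in I$ and every $h\in R[[x;\alpha]]$. Since $I$ is an ideal, $fh=g\in I$, so it is enough to show that $1-g$ is invertible whenever $g\in I$. The candidate inverse is the formal geometric series $\sum_{k\ge 0}g^k$. This series is well defined because $g^k$ has all coefficients in degrees below $k$ equal to zero, by the multiplication rule above. Each coefficient of the sum is therefore a finite sum, and the usual telescoping argument shows it is a two-sided inverse of $1-g$. More generally, a power series is a unit exactly when its constant term is a unit of $R$, which can be proved by solving for the inverse coefficients recursively. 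This is the only place where the skewness requires any care: the recursion involves $\alpha^i$ applied to coefficients. It is routine, however, because $\alpha$ only twists coefficients and never lowers degree.

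Having established $I\subseteq J(R[[x;\alpha]])$ and $R[[x;\alpha]]/I\cong R$, Theorem \ref{R/I is a 2 sqrt J U ring} gives that $R[[x;\alpha]]$ is a $2-\sqrt{J}U$ ring if and only if $R$ is, which proves both directions of the statement at once.
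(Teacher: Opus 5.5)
Your route is the paper's: realize $R$ as $R[[x;\alpha]]/I$ with $I\subseteq J(R[[x;\alpha]])$ and apply Theorem \ref{R/I is a 2 sqrt J U ring}. Your geometric-series check of $I\subseteq J(R[[x;\alpha]])$ also supplies detail that the paper only asserts.

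One correction is needed. With the rule $xr=\alpha(r)x$ you get $x\sum_j b_jx^j=\sum_j\alpha(b_j)x^{j+1}$. So $xR[[x;\alpha]]$ contains only series whose coefficients lie in $\alpha(R)$. When $\alpha$ is not surjective, this set is strictly smaller than $\ker\varepsilon$, and it is not even a left ideal: $x\in xR[[x;\alpha]]$, but $rx\notin xR[[x;\alpha]]$ for $r\notin\alpha(R)$. Your sentence ``Hence $I$ is exactly the kernel of the constant-term map'' is therefore false as written.

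Take $I=\ker\varepsilon=R[[x;\alpha]]x$ instead; this is the ideal $(x)$ the paper uses. Your coefficient computation shows exactly that this set is a two-sided ideal with $R[[x;\alpha]]/I\cong R$, and the rest of your argument then goes through unchanged.
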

   \begin{proof}
       Let $(x)=R[[x: \alpha]]x$. Then $(x) \subseteq J(R[[x; \alpha]])=J(R)+I$. As $R[[x; \alpha]]/(x) \cong R$, the proof follows from Theorem \ref{R/I is a 2 sqrt J U ring}.
   \end{proof}

Note that Lemma \ref{R[[t]] is 2 sqrt JU} can be proved again using the above Proposition.
   %\begin{corollary}
    %   A ring $R$ is a $2- \sqrt{J}U$ iff $R[[x]]$ is a $2- \sqrt{J}U$ ring.
   %\end{corollary}

   \begin{proposition}
       For units $u$ and $v$ in a $2- \sqrt{J}U$ ring $R$, $u^2+v \neq 1$.
   \end{proposition}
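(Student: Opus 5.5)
We argue by contradiction, using only the defining property of a $2-\sqrt{J}U$ ring together with Proposition \ref{basic ppties of sqrt J(R)}(3), which says that $U(R)\cap\sqrt{J(R)}=\phi$. Suppose there exist units $u,v\in U(R)$ with $u^2+v=1$. Then we can solve for $v$:
\begin{equation*}
v=1-u^2 .
\end{equation*}

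Since $R$ is a $2-\sqrt{J}U$ ring and $u$ is a unit, we may write $u^2=1+z$ with $z\in\sqrt{J(R)}$. Substituting gives $v=-z$.

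Next we check that $-z\in\sqrt{J(R)}$. If $z^m\in J(R)$, then $(-z)^m=(-1)^m z^m$. This lies in $J(R)$ because $J(R)$ is an ideal and $(-1)^m$ is central. Hence $v\in\sqrt{J(R)}$. But $v$ is also a unit, so $v\in U(R)\cap\sqrt{J(R)}$, contradicting Proposition \ref{basic ppties of sqrt J(R)}(3). Therefore $u^2+v\neq 1$ for all units $u,v$.

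There is no genuine obstacle here. The only point needing care is the closure of $\sqrt{J(R)}$ under negation, since $\sqrt{J(R)}$ need not be additively closed; the one-line computation above handles it.
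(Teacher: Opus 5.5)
Your proof is correct and matches the paper's argument: write $u^2=1+z$ with $z\in\sqrt{J(R)}$, deduce $v=-z$, and contradict $U(R)\cap\sqrt{J(R)}=\emptyset$. Your explicit check that $\sqrt{J(R)}$ is closed under negation fills in a step the paper leaves implicit.
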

   \begin{proof}
       If possible, we assume there exist units $u, v$ satisfying $u^2+v=1$. Now, due to $R$ being a $2-\sqrt{J}U$ ring, we have $u^2 \in 1 + \sqrt{J(R)}$. Let $u^2=1+z$, where $z \in \sqrt{J(R)}$. Hence, $1=u^2+v=(1+z)+v$. For this reason, we obtain $v=-z \in \sqrt{J(R)}$, which is a contradiction from Proposition \ref{basic ppties of sqrt J(R)} (3).
   \end{proof}

   \begin{proposition}\cite[Lemma 5.1 (2)]{2-UNJ rings}\label{intersection of sqrt J(R) and C(R)}
       For a ring $R$, $\sqrt{J(R)} \cap C(R) \subseteq J(R)$.
   \end{proposition}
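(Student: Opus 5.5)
Let $a\in \sqrt{J(R)}\cap C(R)$, so $a^m\in J(R)$ for some $m\geq 1$ and $a$ commutes with every element of $R$. To show $a\in J(R)$ I will use the standard characterization: $a\in J(R)$ iff $1-ra$ is a unit for every $r\in R$. Since $a$ is central, it suffices to show that $1-ra$ is invertible for each $r\in R$. It would also be enough to show that $1-ra$ is left invertible, since that already characterizes $J(R)$.

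Fix $r\in R$ and put $x=ra$. Because $a$ is central, $x^m=(ra)^m=r^m a^m$, and $a^m\in J(R)$, which is an ideal, so $x^m\in J(R)$. Now apply the telescoping factorization
\[
(1-x)(1+x+x^2+\cdots+x^{m-1}) = 1-x^m = (1+x+\cdots+x^{m-1})(1-x).
\]
Since $x^m\in J(R)$, the element $1-x^m$ is a unit. Let $w=(1-x^m)^{-1}$ and $s=1+x+\cdots+x^{m-1}$. Then $(1-x)\,s\,w=1$ and $w\,s\,(1-x)=1$, so $1-x$ has both a right and a left inverse. Hence $1-x=1-ra$ is a unit.

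Since this holds for every $r\in R$, we conclude $a\in J(R)$. The only delicate point is the identity $(ra)^m=r^ma^m$, and it is exactly where centrality of $a$ is used; without it, $\sqrt{J(R)}$ need not be closed under left multiplication. Everything else is routine.
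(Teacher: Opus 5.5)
Your proof is correct: centrality of $a$ gives $(ra)^m=r^ma^m\in J(R)$, and factoring $1-(ra)^m$ on both sides shows that $1-ra$ is a unit for every $r\in R$, so $a\in J(R)$. The paper gives no proof of its own here and only cites Lemma 5.1(2) of the 2-UNJ paper, but your geometric-series argument is the one the paper itself uses later, in the proof that $G$ must be torsion, to pass from $(g^2-1)^n\in J(R\langle g\rangle)$ with $g^2-1$ central to $g^2-1\in J(R\langle g\rangle)$.
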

   
   \section{matrix extensions}\label{matrix extensions}

\begin{theorem}\label{Mn(R) cannot be 2 sqrt J U}
       For a ring $R$, the matrix ring $M_n(R)$ is a $2- \sqrt{J}U$ ring iff $n\geq 2$.
   \end{theorem}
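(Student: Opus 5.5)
The statement as printed reads ``iff $n\geq 2$'', but the abstract and the later uses of this result (Corollary on Dedekind finiteness, the semisimple classification) require the opposite: for every ring $R$ and every $n\geq 2$, $M_n(R)$ is \emph{not} a $2-\sqrt{J}U$ ring. That is the version I plan to prove. The idea is to exhibit one explicit unit of $M_n(R)$ whose square minus the identity is again a unit. Proposition \ref{basic ppties of sqrt J(R)}(3) says a unit never lies in $\sqrt{J}$, so this rules out the $2-\sqrt{J}U$ property.

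First I handle $n=2$ with the ``Fibonacci'' matrix
\[
U=\begin{pmatrix}1&1\\1&0\end{pmatrix}\in M_2(R).
\]
It is invertible over any ring, with inverse $\begin{pmatrix}0&1\\1&-1\end{pmatrix}$; this is checked by direct multiplication, which uses only $1$ and $-1$. A direct computation gives
\[
U^2=\begin{pmatrix}2&1\\1&1\end{pmatrix}=U+I,
\qquad\text{so}\qquad U^2-I=U.
\]
Hence $U^2-I$ is a unit of $M_2(R)$. By Proposition \ref{basic ppties of sqrt J(R)}(3), $U^2-I\notin\sqrt{J(M_2(R))}$. To see this directly: every power $U^m$ is a unit, and a unit cannot lie in the proper ideal $J(M_2(R))$ because $1\neq 0$. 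So $U^2\notin I+\sqrt{J(M_2(R))}$, and $M_2(R)$ is not a $2-\sqrt{J}U$ ring.

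For general $n\geq 2$ I have two routes. The first is to take the idempotent $e=E_{11}+E_{22}\in M_n(R)$, whose corner is $eM_n(R)e\cong M_2(R)$. Theorem \ref{2 sqrt J U ring passes to corners} then shows that if $M_n(R)$ were $2-\sqrt{J}U$, so would be $M_2(R)$, contradicting the case above. The second is to avoid corners and use the block matrix $V=\operatorname{diag}(U,I_{n-2})$. It is a unit and satisfies $V^2-I_n=\operatorname{diag}(U,0)$. Then $(V^2-I_n)^m=\operatorname{diag}(U^m,0)$, which has the $2\times 2$ unit $U^m$ in its corner. If this lay in $J(M_n(R))=M_n(J(R))$, then every entry of $U^m$ would lie in $J(R)$. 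But then $U^m\in M_2(J(R))=J(M_2(R))$, which is impossible since $U^m$ is a unit.

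I expect no real obstacle. The only point needing care is the $n>2$ step. There one should either cite the corner theorem together with the standard identification $eM_n(R)e\cong M_2(R)$, or use the entrywise description $J(M_n(R))=M_n(J(R))$. I will use the corner argument, since it is already available in the paper.
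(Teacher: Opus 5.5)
Your proof is correct and is the paper's argument: the same unit $A=\begin{pmatrix}1&1\\1&0\end{pmatrix}$ with $A^2-I=A$ a unit, hence $A^2-I\notin\sqrt{J(M_2(R))}$, then the corner $eM_n(R)e\cong M_2(R)$ to handle all $n\geq 2$. You are also right that the intended claim is that $M_n(R)$ is \emph{never} a $2-\sqrt{J}U$ ring for $n\geq 2$, which is what the paper's proof actually shows.
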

   \begin{proof}
       We will first prove that $M_2(R)$ can never be a $2- \sqrt{J}U$ ring. Consider a unit $\begin{pmatrix}
           1 & 1 \\ 1 & 0
       \end{pmatrix}$ form $M_2(R)$. Then \begin{equation*}
           A^2-I=\begin{pmatrix}
               1 & 1 \\ 1 & 0
           \end{pmatrix}.
       \end{equation*}
       Hence, we obtain that $A^2-I$ is an invertible element in $M_2(R)$. Therefore, from \cite[Proposition 2.2 (ii)]{UsqrtJrings}, $A \notin \sqrt{J(M_2(R))}$. This indicates that $M_2(R)$ cannot be a $2- \sqrt{J}U$ ring. As the corner ring of $n \times n$ matrix ring $M_n(R)$ is isomorphic to $M_2(R)$, and $M_2(R)$ cannot be a $2~ \sqrt{J}U$ ring, $M_n(R)$ is not a $2- \sqrt{J}U$ ring for any positive integer $n$.
   \end{proof}

If $R$ is a ring, then $T(R,R,M)=\begin{pmatrix}
    R & M\\ 0 & R
\end{pmatrix}$ denotes the formal matrix ring. The set $T(R,M)=\begin{pmatrix}
    x & n \\ 0 &x
\end{pmatrix} : x\in R, n\in M$ is a subring of $T(R,R,M)$. Notably,$T(R,M)$ is ismorphic to the ring $R \propto M$, where $R \propto M=\{(a,b): a \in R; b \in M \}$. The addition is formulated component-wise. For any $ (x,m), (y,n) \in R \propto M$, multiplication is defined as:
\begin{equation*}
    (x,m)(y,n)=(xy,xn+my).
\end{equation*}
The units in $T(R,M)$ is:
   \begin{equation*}
    U(T(R,M))=\{(u,n): u\in U(R), n\in M \}.
\end{equation*}
The Jacobson radical is represented by: 
\begin{equation*}
    J(T(R,M))=\{ (j,n) : j\in J(R), n\in M \}.
\end{equation*}
It can be easily observed that 
\begin{equation*}
    \sqrt{J(T(R,M))}=\{ (z,n) : z\in J(R), n\in M \}.
\end{equation*}

\begin{theorem}\label{T(R,M) is 2 sqrt(J)U}
    Let R be a ring. Then the following are equivalent:
    \begin{enumerate}
        \item R is a $2-\sqrt{J}U$ ring.
        \item $T(R,M)$ is a $2-\sqrt{J}U$ ring.
        \item $R \propto M$ is a $2-\sqrt{J}U$ ring.
    \end{enumerate}
\end{theorem}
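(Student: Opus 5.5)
Since $T(R,M)\cong R\propto M$ via $\begin{pmatrix} x & n\\ 0 & x\end{pmatrix}\mapsto (x,n)$, and the $2-\sqrt{J}U$ property is defined purely through units, $\sqrt{J(\cdot)}$ and the identity, it is preserved by ring isomorphisms. Thus (2)$\Leftrightarrow$(3) holds, and it suffices to prove (1)$\Leftrightarrow$(3).

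The plan for (1)$\Leftrightarrow$(3) is to realise $R$ as a quotient of $R\propto M$ by an ideal inside the Jacobson radical, and then apply Theorem \ref{R/I is a 2 sqrt J U ring}. Put $I=\{(0,n): n\in M\}$. The computations
\begin{equation*}
(x,m)(0,n)=(0,xn),\qquad (0,n)(x,m)=(0,nx)
\end{equation*}
show that $I$ is a two-sided ideal. The map $\pi:R\propto M\to R$, $(x,m)\mapsto x$, is a surjective ring homomorphism with kernel $I$, so $(R\propto M)/I\cong R$.

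The key step is to check $I\subseteq J(R\propto M)$. The description $J(T(R,M))=\{(j,n): j\in J(R), n\in M\}$ recalled before the statement gives this immediately, taking $j=0$. One can also verify it directly: $(0,n)^2=(0,0)$, and for any $(a,b)$ the element $1-(a,b)(0,n)=(1,-an)$ is a unit, with inverse $(1,an)$, because $(1,-an)(1,an)=(1,an-an)=(1,0)$. With $I\subseteq J(R\propto M)$ established, Theorem \ref{R/I is a 2 sqrt J U ring} says that $R\propto M$ is a $2-\sqrt{J}U$ ring if and only if $(R\propto M)/I\cong R$ is. This is exactly (1)$\Leftrightarrow$(3).

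I expect the main obstacle to be the claim $I\subseteq J(R\propto M)$, since everything else is formal. The direct verification above settles it. As a cross-check, one could instead argue with the explicit descriptions of units and radical. A unit $(u,n)$ squares to $(u^2, un+nu)$. If $u^2=1+z$ with $z\in\sqrt{J(R)}$, then $(u^2,un+nu)-(1,0)=(z,un+nu)$, and $(z,m)^k=(z^k,\ast)$ lies in $J(R\propto M)$ once $z^k\in J(R)$. Note that the paper's displayed formula for $\sqrt{J(T(R,M))}$ should read $z\in\sqrt{J(R)}$ rather than $z\in J(R)$, and this direct route uses the corrected version.
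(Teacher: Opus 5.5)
Your proof is correct, but it takes a different route from the paper's. The paper argues elementwise. It takes a unit $U=\begin{pmatrix} u & m\\ 0 & u\end{pmatrix}$ of $T(R,M)$ and computes $U^2-1=\begin{pmatrix} u^2-1 & um+mu\\ 0 & u^2-1\end{pmatrix}$. It then reads off both implications from its stated descriptions of $U(T(R,M))$ and $\sqrt{J(T(R,M))}$. This is essentially your closing cross-check, run in both directions.

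You instead pass to the quotient by the square-zero ideal $I=\{(0,n):n\in M\}\subseteq J(R\propto M)$ and invoke Theorem~\ref{R/I is a 2 sqrt J U ring}. This is the same device the paper itself uses for $R[x]/x^pR[x]$ and $R[[x;\alpha]]$.

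Your route needs nothing about $R\propto M$ beyond $I^2=0$. It therefore bypasses the descriptions of the units and of $\sqrt{J(T(R,M))}$, which the paper states without proof. You are also right that the latter must read $z\in\sqrt{J(R)}$ rather than $z\in J(R)$ for the paper's computation to go through.

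The cost is that the real content moves into Theorem~\ref{R/I is a 2 sqrt J U ring}. That theorem rests on lifting units modulo $I\subseteq J(R)$, on $J(R/I)=J(R)/I$, and on $\sqrt{J(R)}+J(R)\subseteq\sqrt{J(R)}$. It is sound, although the line $u^2=1+(j-z)$ in its proof should read $u^2=1+(z+j)$. The paper's computation, by contrast, is self-contained at the level of elements and makes the shape of $U^2-1$ explicit.
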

\begin{proof}
    (1)$\Rightarrow$(2)

    Let $u$ be invertible in $R$. Then a $z\in \sqrt{J(R)}$ exists satisfying $u^2=1+z \in 1 + \sqrt{J(R)}$. This helps in creating an invertible matrix $U=\begin{pmatrix}
        u & m \\ 0 & u
    \end{pmatrix}$. Then 
    \begin{align*}
        \begin{pmatrix}
            u & m \\ 0 & u
        \end{pmatrix}^2 - \begin{pmatrix}
            1 & 0 \\ 0 & 1
        \end{pmatrix}&=\begin{pmatrix}
            u^2-1 & um+mu \\ 0 & u^2-1
        \end{pmatrix} \\ & = \begin{pmatrix}
            z& um+mu \\ 0 & z
        \end{pmatrix}
    \end{align*}
    As $z\in \sqrt{J(R)}$ and $um+mu \in M$, we get $U^2-I \in \sqrt{J(T(R,M))}$, as required.

    (2)$\Rightarrow$(1)

    If we assume $T(R,M)$ is a $2-\sqrt{J}U$ ring, then let $U=\begin{pmatrix}
        u & m \\ 0 & u
    \end{pmatrix}$ be a unit in $T(R,M)$, hence we have \begin{equation*}
        \begin{pmatrix}
            u & m \\ 0 & u
        \end{pmatrix}^2 - \begin{pmatrix}
            1 & 0 \\ 0 & 1
        \end{pmatrix} \in \sqrt{J(T(R,M))}.
    \end{equation*} This gives the matrix $\begin{pmatrix}
        
            u^2-1 & um+mu \\ 0 & u^2-1
        \end{pmatrix}
    $ is an element in $\sqrt{J(T(R,M))}$. Thus, $u^2-1 \in \sqrt{J(R)}$, as required.  
\end{proof}

The remaining part (2)$\iff$(3) is clear as $T(R,M) \cong R \propto M$.

\begin{proposition}
    A ring $R$ is called a $2-\sqrt{J}U$ ring is equivalent to:
       \begin{enumerate}
           \item $R \propto R$ is $2-\sqrt{J}U$.
           \item $T(R,R)$ is $2-\sqrt{J}U$.
           \item $R[x]/(x^2)$ is $2-\sqrt{J}U$.
       \end{enumerate} 
\end{proposition}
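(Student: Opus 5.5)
The statement is a specialization of Theorem \ref{T(R,M) is 2 sqrt(J)U} together with one identification. The plan is to take the bimodule $M=R$, which is the regular $(R,R)$-bimodule. Then Theorem \ref{T(R,M) is 2 sqrt(J)U} immediately gives that $R$ is $2-\sqrt{J}U$ iff $T(R,R)$ is $2-\sqrt{J}U$ iff $R\propto R$ is $2-\sqrt{J}U$. This yields the equivalence of $R$ being $2-\sqrt{J}U$ with items (1) and (2).

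For item (3), I will exhibit a ring isomorphism $R[x]/(x^2)\cong T(R,R)$. The map is $a+bx+(x^2)\mapsto \begin{pmatrix} a & b\\ 0 & a\end{pmatrix}$. It is well defined because every class has a unique representative $a+bx$ with $a,b\in R$. It is additive and bijective by inspection. It is multiplicative because
\begin{equation*}
(a+bx)(c+dx)\equiv ac+(ad+bc)x \pmod{x^2},
\end{equation*}
which matches the matrix product $\begin{pmatrix} ac & ad+bc\\ 0 & ac\end{pmatrix}$. Here $x$ is central in $R[x]$, so $bxc=bcx$.

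Being $2-\sqrt{J}U$ is a ring-theoretic property. An isomorphism carries units to units and $\sqrt{J(\cdot)}$ onto $\sqrt{J(\cdot)}$, since it preserves $J$ and powers; this is the bijective case of Proposition \ref{homomorphism}. So (3) is equivalent to (2).

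There is an alternative route for (3). The ideal $(x)/(x^2)$ is nilpotent, hence lies in the Jacobson radical, and the quotient by it is isomorphic to $R$. Theorem \ref{R/I is a 2 sqrt J U ring} then gives the equivalence directly, as in the earlier lemma on $R[x]/x^pR[x]$ with $p=2$. I would mention this as a check.

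No step is a real obstacle. The only point needing care is verifying that the isomorphism $R[x]/(x^2)\cong T(R,R)$ respects multiplication when $R$ is noncommutative. This works because $x$ commutes with $R$, so the cross term $ad+bc$ matches the $(1,2)$-entry of the matrix product.
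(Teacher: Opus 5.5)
Your proposal is correct and matches the argument the paper intends. The paper states this proposition without proof, as the specialization $M=R$ of Theorem \ref{T(R,M) is 2 sqrt(J)U}, with item (3) coming from the identification $R[x]/(x^2)\cong T(R,R)$ (or from the earlier lemma on $R[x]/x^pR[x]$ with $p=2$). Your explicit check of that isomorphism supplies details the paper omits, and you were right to use only the bijective case of Proposition \ref{homomorphism}: the general surjective case actually fails, e.g.\ for $\mathbb{Z}\to\mathbb{F}_5$.
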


Assume $X$ and $Y$ are the rings and $_XM_Y$ and $_YN_X$ are the bimodules. Now, recalling the Morita context, we describe the context product. They are the mappings $ M \times  N \to X$ and $N \times M \to Y$ defined by $(m,n) \mapsto mn$ and $(n,m) \mapsto nm$, respectively. Hence, by usual matrix operations, the Morita context is $\begin{pmatrix}
    X & M \\ N & Y
\end{pmatrix}$. If the context products are trivial, that is, $MN$ and $NM$ are both $0$, then the Morita context is known as a trivial Morita context. The $\begin{pmatrix}
    X & M \\ N & Y
\end{pmatrix}$ is isomorphic to $T(X \times Y, M \oplus N)$. Examples are formal triangular matrices and $T_n(R)$.

   \begin{proposition}
    Suppose M is a bimodule over the rings $R_1, ~ R_2$. Then $R_1$ and $R_2$ are $2-\sqrt{JU}$ rings, provided $\begin{pmatrix}
        R_1 & M \\ 0 & R_2
    \end{pmatrix}$ is a $2-\sqrt{J}U$ ring.
\end{proposition}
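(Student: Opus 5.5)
Set $T=\begin{pmatrix} R_1 & M \\ 0 & R_2 \end{pmatrix}$ and assume $T$ is a $2-\sqrt{J}U$ ring. We realize each $R_i$ as a homomorphic image of $T$ and then quote Proposition \ref{homomorphism}; this avoids computing $\sqrt{J(T)}$ directly.

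First, the maps
\[
\pi_1:\begin{pmatrix} a & m \\ 0 & b \end{pmatrix}\mapsto a,\qquad \pi_2:\begin{pmatrix} a & m \\ 0 & b \end{pmatrix}\mapsto b
\]
are surjective ring homomorphisms $T\to R_1$ and $T\to R_2$. Multiplicativity holds because the diagonal entries of a product of upper triangular formal matrices are the products of the diagonal entries. The off-diagonal term $am'+mb'$ never enters the diagonal, since the lower-left entry is $0$.

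Second, we check the content of Proposition \ref{homomorphism} concretely rather than relying on its terse proof. Let $u\in U(R_1)$. Then $U=\begin{pmatrix} u & 0 \\ 0 & 1 \end{pmatrix}$ is a unit of $T$ with inverse $\begin{pmatrix} u^{-1} & 0 \\ 0 & 1 \end{pmatrix}$. By hypothesis $U^2-I=\begin{pmatrix} u^2-1 & 0 \\ 0 & 0 \end{pmatrix}\in\sqrt{J(T)}$, so for some $m\ge 1$ we have $(U^2-I)^m=\begin{pmatrix} (u^2-1)^m & 0 \\ 0 & 0 \end{pmatrix}\in J(T)$. Applying $\pi_1$ and using the standard fact that a surjective homomorphism sends $J(T)$ into $J(R_1)$, we get $(u^2-1)^m\in J(R_1)$. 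Hence $u^2\in 1+\sqrt{J(R_1)}$. The argument for $R_2$ is symmetric, using $\begin{pmatrix} 1 & 0 \\ 0 & v \end{pmatrix}$ and $\pi_2$.

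The only point needing care is that surjective homomorphisms map $J$ into $J$. This holds because images of quasi-regular elements of an ideal remain in an ideal of quasi-regular elements: if $j\in J(T)$, then $1-\pi(t)\pi(j)$ is invertible for every $t$, and surjectivity of $\pi$ covers every element of $R_i$. Alternatively one can use the known description $J(T)=\begin{pmatrix} J(R_1) & M \\ 0 & J(R_2) \end{pmatrix}$ and read off the diagonal entry. Either route completes the proof.
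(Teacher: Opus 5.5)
Your proof is correct. It performs the same reduction to the diagonal as the paper, but packaged differently.

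The paper identifies $\begin{pmatrix} R_1 & M \\ 0 & R_2 \end{pmatrix}$ with the trivial extension $T(R_1\times R_2, M)$. It then applies Theorem \ref{T(R,M) is 2 sqrt(J)U} to get that $R_1\times R_2$ is $2-\sqrt{J}U$, and passes to the factors by Proposition \ref{homomorphism}. You instead project directly onto each diagonal entry, lift each unit explicitly to $\mathrm{diag}(u,1)$ or $\mathrm{diag}(1,v)$, and use only that a surjection carries $J$ into $J$.

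Your explicit lift is more than a stylistic choice. Proposition \ref{homomorphism} fails for arbitrary homomorphic images: the units of $\mathbb{Z}$ are $\pm 1$, so $\mathbb{Z}$ is $2-\sqrt{J}U$, but $\mathbb{Z}/5\mathbb{Z}$ is not. What actually drives both arguments is that every unit of the image lifts to a unit of the source. You check this; the paper leaves it implicit, though for $R_1\times R_2\to R_i$ it does hold via $(u,1)$.

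In return, the paper's route gives the converse at no cost, since Theorem \ref{T(R,M) is 2 sqrt(J)U} and Proposition \ref{product in 2 sqrt J U rings} are equivalences. Alternatively, the kernel $\begin{pmatrix} 0 & M \\ 0 & 0 \end{pmatrix}$ of $T\to R_1\times R_2$ is square-zero, hence inside $J(T)$, and Theorem \ref{R/I is a 2 sqrt J U ring} applies. Your projection argument gives only the stated direction.
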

\begin{proof}
    The proof follows from Theorem \ref{T(R,M) is 2 sqrt(J)U} and Proposition \ref{homomorphism} as $\begin{pmatrix}
        R_1 & M \\ 0 & R_2
    \end{pmatrix} \cong T(R_1 \times R_2, M)$.
\end{proof}

\begin{proposition}
    In a ring R and $n\geq 1$, $T_n(R)$ is a $2-\sqrt{J}U$ ring implies $R$ is $2-\sqrt{J}U$ ring.
\end{proposition}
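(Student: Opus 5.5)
The natural route is to realize $R$ as a homomorphic image of $T_n(R)$ and then apply Proposition \ref{homomorphism}. For each $k$ with $1\le k\le n$, the map
\begin{equation*}
\pi_k: T_n(R)\to R,\qquad (a_{ij})\mapsto a_{kk}
\end{equation*}
is a surjective ring homomorphism. Multiplicativity holds because the $(k,k)$ entry of a product of upper triangular matrices is $\sum_{l} a_{kl}b_{lk}$, and only the term $l=k$ survives: $a_{kl}=0$ for $l<k$ and $b_{lk}=0$ for $l>k$. Its kernel is the ideal of upper triangular matrices whose $(k,k)$ entry is $0$. Taking $k=1$, $R\cong T_n(R)/\ker\pi_1$. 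If $T_n(R)$ is $2-\sqrt{J}U$, then Proposition \ref{homomorphism} gives that $R$ is $2-\sqrt{J}U$.

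Because the proof of Proposition \ref{homomorphism} is only sketched, I would also verify the transfer directly.

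\textbf{Units.} Let $u\in U(R)$. Then $U=\mathrm{diag}(u,\dots,u)$ is a unit of $T_n(R)$, so $U^2-I=\mathrm{diag}(u^2-1,\dots,u^2-1)$ lies in $\sqrt{J(T_n(R))}$. Hence $(U^2-I)^m\in J(T_n(R))$ for some $m$, and this power is $\mathrm{diag}((u^2-1)^m,\dots)$.

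\textbf{Radical.} I would use the standard description of $J(T_n(R))$ as the set of upper triangular matrices whose diagonal entries lie in $J(R)$. This follows because the strictly upper triangular ideal $N$ is nilpotent, so $N\subseteq J(T_n(R))$, and $T_n(R)/N\cong R^n$. It gives $(u^2-1)^m\in J(R)$, so $u^2\in 1+\sqrt{J(R)}$.

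Alternatively, one can avoid the explicit description of $J(T_n(R))$. Surjective homomorphisms send $J$ into $J$: if $x\in J(S)$ then $1-sx$ is a unit for all $s\in S$, and the image of a unit is a unit. Hence $\pi_1(\sqrt{J(T_n(R))})\subseteq\sqrt{J(R)}$, and applying $\pi_1$ to $U^2-I$ yields $u^2-1\in\sqrt{J(R)}$.

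The only step needing care is this last one: that $J$, and therefore $\sqrt{J}$, is mapped into the corresponding set of the image under a surjection. The argument just given settles it, so I do not expect a genuine obstacle.

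The case $n=1$ is trivial, since $T_1(R)=R$.
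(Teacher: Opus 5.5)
Your direct verification is correct and complete. The matrix $\mathrm{diag}(u,\dots,u)$ is a unit of $T_n(R)$. Either the description of $J(T_n(R))$, or the fact that the surjection $\pi_1$ maps $\sqrt{J(T_n(R))}$ into $\sqrt{J(R)}$, then gives $u^2-1\in\sqrt{J(R)}$.

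The opening paragraph, however, should not stand as a proof, because Proposition \ref{homomorphism} is false as stated. For example, $\mathbb{Z}$ is a $2-\sqrt{J}U$ ring, since its only units $\pm1$ square to $1$. Its image $\mathbb{F}_5$ is not, by Theorem \ref{division ring which is 2 sqrt J U}. The obstruction is that units of a quotient need not lift to units. That, not the behaviour of $J$ under surjections, is the step that actually needs care. In your argument it is secured by the ring-homomorphic section $r\mapsto rI$ of $\pi_1$, which maps $U(R)$ into $U(T_n(R))$. Theorem \ref{R/I is a 2 sqrt J U ring} does not rescue the first route either: for $n\ge 2$ the kernel of $\pi_1$ contains the idempotent $e_{22}$, so it is not contained in $J(T_n(R))$. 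So lead with the direct argument and drop the appeal to Proposition \ref{homomorphism}.

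The paper gives no separate proof. The statement is placed as a consequence of the preceding proposition on $\begin{pmatrix} R_1 & M\\ 0 & R_2\end{pmatrix}$, via $T_n(R)\cong\begin{pmatrix} R & R^{n-1}\\ 0 & T_{n-1}(R)\end{pmatrix}$, with $R^{n-1}$ the bimodule of row vectors. That proposition identifies the formal triangular ring with $T(R_1\times R_2,M)$. It then uses (2)$\Rightarrow$(1) of Theorem \ref{T(R,M) is 2 sqrt(J)U}, where the unit lifting happens implicitly via $u\mapsto\begin{pmatrix} u & m\\ 0 & u\end{pmatrix}$. Finally it passes to the factors, a step for which Proposition \ref{product in 2 sqrt J U rings} is the valid justification, not Proposition \ref{homomorphism} as cited.

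Your route avoids this reduction to the $2\times 2$ trivial-extension case and works directly in $T_n(R)$. It is shorter, self-contained, and makes the lifting of units explicit. The paper's route has the advantage of reusing the $T(R,M)$ machinery already in place.
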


Now, we define $BT(R, M)$. For, assume a ring $R$ and a bimodule $M$ over $R$. Then, $BT(R,M)=\{ (x,p,y,q): x,y \in R; p,q \in M\}$. With the operations of component-wise addition and multiplication described as:
\small{\begin{equation*}
    (x_1,p_1,y_1,q_1)(x_2,p_2,y_2,q_2)=(x_1x_2, x_1p_2+p_1x_2, x_1y_2+y_1x_2, x_1q_2+p_1y_2,y_1p_2,q_1x_2),
\end{equation*}}
$BT(R,M)$ forms a ring.
Additionally, $BT(R,M)\cong T(T(R,M), T(R,M))$. We also have:
\begin{equation*}
    BT(R,M)= \left \{ \begin{pmatrix}
        x & p & y & q \\ 0 & x & 0 & y \\ 0 & 0 & x & p\\ 0 & 0 & 0 & x    \end{pmatrix} : x,y \in R ; p,q \in M \right \}.
\end{equation*}
By using the map $m+nx+py+qxy \to \begin{pmatrix}
     m&n&p&q\\
     0&m&0&p\\
     0&0&m&n\\
     0&0&0&m
 \end{pmatrix}$, we obtain $R[x,y]/(x^2,y^2) \cong BT(R,R)$.

 \begin{theorem}
     Let R be a ring and M be a bimodule over R. Then the following are equivalent:
     \begin{itemize}
         \item[(i)] R is a $2-\sqrt{J}U$ ring.
         \item[(ii)] $R \bowtie M$ is a $2-\sqrt{J}U$ ring.
         \item[(iii)] BT(R,M) is a $2-\sqrt{J}U$ ring.
         \item[(iv)] BT(R,R) is a $2-\sqrt{J}U$ ring.
         \item[(v)] $R[x,y]/(x^2,y^2)$ is a $2-\sqrt{J}U$ ring.
         \item[(vi)] $R \bowtie R$ is a $2-\sqrt{J}U$ ring.
     \end{itemize}
 \end{theorem}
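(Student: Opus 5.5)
The plan is to reduce every item to Theorem \ref{T(R,M) is 2 sqrt(J)U}, and as a fallback to Theorem \ref{R/I is a 2 sqrt J U ring}, using the isomorphisms recorded just before the statement. I read $R \bowtie M$ as the trivial extension $R \propto M \cong T(R,M)$ introduced above. Under that reading, (i)$\iff$(ii) is exactly the equivalence (1)$\iff$(3) of Theorem \ref{T(R,M) is 2 sqrt(J)U}. Taking $M=R$ in that equivalence gives (i)$\iff$(vi) at once.

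For (i)$\iff$(iii) I would use $BT(R,M)\cong T(T(R,M),T(R,M))$ and apply Theorem \ref{T(R,M) is 2 sqrt(J)U} twice. First apply it to the ring $S=T(R,M)$ with the $S$-bimodule $S$ itself, which gives that $BT(R,M)$ is $2-\sqrt{J}U$ iff $S$ is. Then apply it to $R$ with the bimodule $M$, which gives that $S$ is $2-\sqrt{J}U$ iff $R$ is. Specialising to $M=R$ then yields (i)$\iff$(iv). Finally (iv)$\iff$(v) follows from the displayed isomorphism $R[x,y]/(x^2,y^2)\cong BT(R,R)$, since being $2-\sqrt{J}U$ is clearly invariant under ring isomorphism.

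As an independent check, which also covers any alternative reading of $\bowtie$, I would argue directly with Theorem \ref{R/I is a 2 sqrt J U ring}. In each of the rings $R\propto M$, $BT(R,M)$ and $R[x,y]/(x^2,y^2)$, the map sending an element to its diagonal entry $x$ (respectively its constant term) is a surjective ring homomorphism onto $R$. Its kernel $I$ is a nilpotent ideal: it satisfies $I^2=0$ for $R\propto M$ and $I^3=0$ in the $BT$ and $R[x,y]/(x^2,y^2)$ cases. A nilpotent ideal lies in the Jacobson radical, so $I\subseteq J$, and Theorem \ref{R/I is a 2 sqrt J U ring} gives the equivalence with (i) directly.

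The main obstacle I expect is bookkeeping rather than mathematics. One point is confirming that the multiplication defined for $BT(R,M)$ really matches $T(T(R,M),T(R,M))$, or equivalently the $4\times 4$ matrix form; in particular the printed product formula appears garbled and must be replaced by the matrix product. The other is checking the nilpotency of the kernel directly from the $4\times4$ strictly upper triangular form. Once that is in place, every implication is a one-line appeal to an earlier result.
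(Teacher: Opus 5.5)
Your proposal is correct and is essentially the paper's argument. The paper's one-line proof just says everything follows from Theorem \ref{T(R,M) is 2 sqrt(J)U}, and you make that explicit via $R\bowtie M\cong R\propto M$, the isomorphism $BT(R,M)\cong T(T(R,M),T(R,M))$ (applying the theorem twice), and $R[x,y]/(x^2,y^2)\cong BT(R,R)$. Your fallback through Theorem \ref{R/I is a 2 sqrt J U ring} is also valid; it uses that the kernel of the diagonal projection onto $R$ satisfies $I^2=0$ or $I^3=0$, though it only covers readings of $\bowtie$ admitting such a projection with nilpotent kernel, not literally any reading.
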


 \begin{proof}
     The results are clear following Theorem \ref{T(R,M) is 2 sqrt(J)U}.
 \end{proof}

   \section{group rings}

   \begin{proposition}
       When $RG$ is a $2-\sqrt{J}U$ ring, so is the ring $R$.
   \end{proposition}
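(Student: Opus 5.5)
The natural approach is to realise $R$ as a homomorphic image of $RG$ and then invoke Proposition \ref{homomorphism}. Take the augmentation map
\[
\varepsilon : RG \to R, \qquad \varepsilon\Big(\sum_{g\in G} a_g g\Big)=\sum_{g\in G} a_g .
\]
It is a surjective ring homomorphism: it is additive, it sends $1=1_R e$ to $1$, and multiplicativity follows because $\varepsilon(ag\cdot bh)=\varepsilon(ab\,gh)=ab=\varepsilon(ag)\varepsilon(bh)$ on monomials, extended bilinearly. It is onto because $R\cdot e\subseteq RG$ maps identically onto $R$. Hence $R\cong RG/\Delta(G)$, where $\Delta(G)=\ker\varepsilon$ is the augmentation ideal. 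Proposition \ref{homomorphism} then gives that $R$ is a $2-\sqrt{J}U$ ring.

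If the brief argument in Proposition \ref{homomorphism} seems too thin, I would instead argue directly, using that $R$ embeds in $RG$ via $r\mapsto re$ and that $\varepsilon$ restricts to the identity on this copy. Let $u\in U(R)$. Then $ue\in U(RG)$, so $(ue)^2=1+z$ with $z\in\sqrt{J(RG)}$, say $z^m\in J(RG)$. Applying $\varepsilon$ gives $u^2=1+\varepsilon(z)$ and $\varepsilon(z)^m=\varepsilon(z^m)\in\varepsilon(J(RG))$.

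The step needing care is showing $\varepsilon(J(RG))\subseteq J(R)$. For a surjective homomorphism $\phi:S\to T$ one has $\phi(J(S))\subseteq J(T)$. To see this, take $j\in J(S)$ and any $t=\phi(s)\in T$. Then $1-sj$ is invertible in $S$, so $1-t\phi(j)=\phi(1-sj)$ is invertible in $T$. Hence $\phi(j)\in J(T)$. This yields $\varepsilon(z)\in\sqrt{J(R)}$ and so $u^2\in 1+\sqrt{J(R)}$, completing the proof. This surjectivity argument is the only nontrivial point; everything else is bookkeeping.

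Alternatively, the copy $Re\subseteq RG$ is a retract via $\varepsilon$. So a unit of $Re$ in $RG$ is already a unit in $R$, and one could try Proposition \ref{unit closed subring} instead. The homomorphic-image route is cleaner, however, and is the one I would write up.
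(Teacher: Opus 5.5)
Your direct argument (via $ue\in U(RG)$ and $\varepsilon(J(RG))\subseteq J(R)$) is correct and complete. The route you say you would actually write up is not valid, because Proposition~\ref{homomorphism} is false as stated. So ``$R\cong RG/\Delta(G)$, hence $R$ is $2-\sqrt{J}U$'' does not constitute a proof.

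A surjection $\phi:S\to T$ does send $U(S)$ into $U(T)$, and, as you show, it sends $\sqrt{J(S)}$ into $\sqrt{J(T)}$. But units of $T$ need not lift to units of $S$. For example, $\mathbb{Z}$ is a $2-\sqrt{J}U$ ring, since its only units are $\pm 1$ and their squares equal $1$. Its image $\mathbb{Z}/5\mathbb{Z}=\mathbb{F}_5$ is not: $2^2-1=3$ is a unit and $\sqrt{J(\mathbb{F}_5)}=0$ (compare Theorem~\ref{division ring which is 2 sqrt J U}).

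The augmentation is rescued only because it is split by the ring embedding $r\mapsto re$. Every $u\in U(R)$ is the image of the unit $ue\in U(RG)$. That is exactly the ingredient your direct argument uses, together with $\varepsilon(J(RG))\subseteq J(R)$. So write up the direct argument, and drop the claim that the homomorphic-image route is the cleaner one.

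The paper's proof views $v\in U(R)$ as a unit of $RG$ and notes $v^2-1\in\sqrt{J(RG)}\cap R$. It then asserts $\sqrt{J(RG)}\cap R\subseteq\sqrt{J(R)}$ without justification. Your direct proof is the same argument with that inclusion actually proved. The element $z=(u^2-1)e$ lies in the copy of $R$ and is fixed by $\varepsilon$, so $z^m\in J(RG)$ gives $z^m=\varepsilon(z^m)\in J(R)$.

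Your third alternative is also sound and is closest in spirit to the paper. The retraction $\varepsilon$ gives $U(RG)\cap R=U(R)$: if $rw=wr=1$ in $RG$, then $r\varepsilon(w)=\varepsilon(w)r=1$. Hence $R$ is a unit-closed subring, and Proposition~\ref{unit closed subring} applies directly.
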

   \begin{proof}
       We start with assuming that $RG$ is a $2-\sqrt{J}U$ ring and $v$ is an invertible element in $R$. Hence, $v$ is a unit in $RG$. As a result, we obtain that $v^2 \in 1+\sqrt{J(RG)}$. This simplifies to $v^2 \in (1+\sqrt{J(RG)}) \cap R$. Consequently, $v^2 \in 1 + (\sqrt{J(RG)} \cap R) \subseteq 1+ \sqrt{J(R)}$, which proves $R$ is a $2-\sqrt{J}U$ ring, as required.
   \end{proof}

   \begin{proposition}
       For a subgroup $H$ of a group $G$ and a ring $R$, $RH$ is a $2-\sqrt{J}U$ ring if $RG$ is a $2-\sqrt{J}U$ ring.
   \end{proposition}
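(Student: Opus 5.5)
The plan is to realize $RH$ as a unit-closed subring of $RG$ and then apply Proposition \ref{unit closed subring}. Concretely, $RH$ is a subring of $RG$ containing $1$. So once we know $U(RH)=U(RG)\cap RH$, Proposition \ref{unit closed subring} immediately gives that $RH$ is a $2-\sqrt{J}U$ ring whenever $RG$ is.

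The key step is therefore unit-closedness. For this I would use the standard projection $\pi_H: RG\to RH$, $\sum_{g\in G} a_g g\mapsto \sum_{h\in H} a_h h$. This map is a homomorphism of left and of right $RH$-modules. Indeed, for $h'\in H$ and $g\in G$, we have $h'g\in H$ iff $g\in H$, and the same holds for $gh'$; so $\pi_H(xy)=x\pi_H(y)$ and $\pi_H(yx)=\pi_H(y)x$ for $x\in RH$ and $y\in RG$.

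Now take $a\in RH$ that is a unit in $RG$, with inverse $b\in RG$. Applying $\pi_H$ to $ab=1$ gives $a\pi_H(b)=\pi_H(1)=1$, and applying it to $ba=1$ gives $\pi_H(b)a=1$. Hence $\pi_H(b)\in RH$ is a two-sided inverse of $a$ in $RH$, so in fact $b=\pi_H(b)\in RH$. This shows $U(RG)\cap RH\subseteq U(RH)$. The reverse inclusion is trivial, so $RH$ is unit closed in $RG$.

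Alternatively, one could argue directly as in the preceding proposition. For $v\in U(RH)$ we get $v^2\in 1+\sqrt{J(RG)}$, and then we would need $\sqrt{J(RG)}\cap RH\subseteq\sqrt{J(RH)}$. That inclusion is exactly what \cite[Lemma 2.7]{UsqrtJrings} supplies for unit-closed subrings, so this route again reduces to the unit-closedness above.

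The only real obstacle is verifying that $\pi_H$ is $RH$-bilinear, which is the coset bookkeeping in the second paragraph. Everything else is a direct citation of Proposition \ref{unit closed subring}.
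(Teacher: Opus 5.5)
Your proposal is correct and follows the paper's route. The paper's proof likewise observes that $RH$ is a unit-closed subring of $RG$, citing this as well known, and then applies Proposition \ref{unit closed subring}; your $RH$-bilinear projection $\pi_H$ supplies a valid proof of that unit-closedness.
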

   \begin{proof}
       For the subgroup $H$ of group $G$, it is well known that $RH$ is a unit-closed subring of $RG$. Thereby, following Proposition \ref{unit closed subring}, it follows that $RH$ is a $2-\sqrt{J}U$ ring.
   \end{proof}

   \begin{theorem}
       If $RG$ is a $2 - \sqrt{J}U$ ring and $2$ is an element in the Jacobson radical, then the group $G$ is a $2$-group.
   \end{theorem}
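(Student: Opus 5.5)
The plan is to reduce to the $\sqrt{J}U$ case and then use the standard fact that in a group ring with unipotent-type units, every group element $g$ forces $g-1$ to be "nilpotent-like", which is only possible when $g$ has $2$-power order. First, by the proposition just before Proposition \ref{intersection of sqrt J(R) and C(R)} (a ring is $\sqrt{J}U$ iff it is $2-\sqrt{J}U$ with $2\in J$), I need $2\in J(RG)$. Since $2$ is central in $RG$, I will note that $2\in J(R)$ implies $2\in J(RG)$: this is not automatic for arbitrary $G$. Instead I would work directly. Take $g\in G$; it is a unit of $RG$, so $g^2-1\in\sqrt{J(RG)}$. I then want to conclude that $g-1\in\sqrt{J(RG)}$, mimicking the proof of that proposition. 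Since $2\in J(R)$ and $2$ is central, $2(g-1)$ lies in the ideal $2RG$. I would argue $2RG\subseteq J(RG)$ by showing $1+2x$ is invertible for each $x\in RG$. I expect this step to be the main obstacle and would handle it via the quotient $RG/2RG\cong (R/2R)G$ together with the fact that $2R\subseteq J(R)$. Failing that, I would pass to $\bar R=R/J(R)$, where $2=0$, using Corollary \ref{R/J(R) is a sqrt J U ring}-style reductions and the surjection $RG\to \bar R G$ with Proposition \ref{homomorphism}. In characteristic $2$, $(g-1)^2=g^2-1$, so $(g-1)^2\in\sqrt{J}$ and Proposition \ref{basic ppties of sqrt J(R)}(2) gives $g-1\in\sqrt{J}$.

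With $g-1\in\sqrt{J(\bar RG)}$ for every $g\in G$, suppose some $g$ has an element of odd prime order $p$; replace $g$ by that power. The element $g-1$ commutes with $g$ and with $\bar R$. To get a contradiction, I will use $1+g+\dots+g^{p-1}$. From $(g-1)(1+g+\dots+g^{p-1})=0$ and the binomial expansion in characteristic $2$, I get $g^{2^k}=1+(g-1)^{2^k}$. Choosing $k$ with $2^k\equiv 1 \pmod p$ gives $g=1+(g-1)^{2^k}$, so $g-1=(g-1)^{2^k}$. Then $e=(g-1)^{2^k-1}$ is an idempotent (since $2^k-1\geq1$, $e^2=(g-1)^{2^{k+1}-2}=(g-1)^{2^k-1}$ after using the relation). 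This idempotent is a power of an element of $\sqrt{J}$, so it lies in $\sqrt{J}$ by definition, and Proposition \ref{basic ppties of sqrt J(R)}(3) forces $e=0$. Hence $g-1=(g-1)^{2^k}=(g-1)e=0$, so $g=1$, a contradiction.

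Elements of infinite order also have to be ruled out. For such a $g$, I would use that $g-1\in\sqrt{J}$ means $(g-1)^{2^k}=g^{2^k}-1\in J(\bar RG)$ for some $k$. Setting $h=g^{2^k}$ of infinite order, $h-1\in J$, so $h$ and hence $h-1+1$ behaves like a quasi-regular element. Augmentation-type arguments, namely mapping $\bar R\langle h\rangle$ to a Laurent polynomial ring in which $J$ is nil-free, would then give a contradiction. This last step is the one I am least sure of and would check carefully. If it resists, I would interpret "$2$-group" as every element of finite order being $2$-power, which the torsion argument already establishes.
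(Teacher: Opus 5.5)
The genuine gap is the case of elements of infinite order, which the statement requires you to exclude. Your sketch there is not an argument. $\bar R\langle h\rangle$ already is a Laurent polynomial ring, and concluding $h-1\notin J(\bar R\langle h\rangle)$ would need an Amitsur/Bergman-type description of its radical that you do not supply. Falling back to reading ``$2$-group'' as ``every element of finite order has $2$-power order'' changes the theorem. The paper's written proof also only treats an element of odd order, leaving torsion to the next theorem, so it offers no shortcut here.

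This case closes with your own machinery:
\begin{itemize}
\item If $g$ has infinite order, $R\langle g\rangle$ is a unit-closed subring of $RG$. It is therefore $2-\sqrt{J}U$ by Proposition~\ref{unit closed subring}, so $g^2-1\in\sqrt{J(R\langle g\rangle)}$.
\item Take the surjection $R\langle g\rangle\cong R[t,t^{-1}]\to\bar R C_3$, $g\mapsto c$, with $c$ a generator of $C_3$ and $\bar R=R/J(R)$. It carries $J$ into $J$, hence $\sqrt{J}$ into $\sqrt{J}$, so $c^2-1\in\sqrt{J(\bar R C_3)}$.
\item Your characteristic-$2$ argument with $2^2\equiv 1\pmod 3$ then gives $c=1$ in $\bar R C_3$. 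This is absurd since $\bar R\neq 0$.
\end{itemize}
You must restrict to $R\langle g\rangle$ first, because $\langle g\rangle\to C_3$ need not extend to $G$. The same device with $C_p$, for $p$ an odd prime dividing the order of $g$, treats all non-$2$-elements uniformly.

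For elements of finite odd order your argument is correct and genuinely different from the paper's. The paper first shows $2\in J(RG)$: since $3\in U(R)\subseteq U(RG)$, we get $8=3^2-1\in\sqrt{J(RG)}$, so $2\in\sqrt{J(RG)}\cap C(RG)\subseteq J(RG)$ by Proposition~\ref{intersection of sqrt J(R) and C(R)}. It then gets $1+g^2\in J$, shows $1+g+\dots+g^{2n}$ is a unit, and concludes from $(g-1)(1+g+\dots+g^{2n})=0$ that $g=1$. You never need $2\in J(RG)$ or any centrality: Frobenius in $\bar RG$ plus the idempotent $(g-1)^{2^k-1}\in\sqrt{J}$ does the work. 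This also sidesteps the paper's claim $g^2-1\in C(RG)$, which is valid only inside $R\langle g\rangle$ when $G$ is non-abelian. In exchange, the paper's route shows that $RG$ is in fact $\sqrt{J}U$.

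Two of your justifications need repair.
\begin{itemize}
\item Your first attempt at $2RG\subseteq J(RG)$ via $RG/2RG$ is circular, since lifting invertibility modulo $2RG$ presupposes $2RG\subseteq J(RG)$. It is also false without the $2-\sqrt{J}U$ hypothesis: for $R=\mathbb{Z}_{(2)}$ and $G=\mathbb{Z}$, $1+2t$ is not a unit. Under the hypothesis it does hold, but only via the unit $3$ as above.
\item The passage to $\bar RG$ should not rest on Proposition~\ref{homomorphism}. That would require units of $\bar RG$ to lift, and the kernel $J(R)G$ need not lie in $J(RG)$ for infinite $G$. The proposition fails in general: $\mathbb{Z}$ is $2-\sqrt{J}U$ while $\mathbb{Z}/5\mathbb{Z}$ is not. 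All you actually use is that $g^2-1\in\sqrt{J(RG)}$ maps into $\sqrt{J(\bar RG)}$ under a surjection, and that is true.
\end{itemize}
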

   \begin{proof}
       Proceeding by the method of contradiction, we assume that $G$ is not a 2-group. Hence, let $g\in G$ be such that the order of $g$ is not divisible by 2. This provides the order of $g$ in the form of $2n+1$, where $n$ is a positive integer. Hence, $g^{2n+1}-1=0.$ This gives :
       \begin{equation*}(g-1)(1+g+g^2+ \dots + g^{2m})=0. \end{equation*} \\
       We claim that for any positive integer $m$, 
       \begin{equation*}
           1+g+g^2+ \dots + g^{2m} \in U(RG).
       \end{equation*}
      We first establish the result for $m=1$. That is, we aim to show that $1+g+g^2$ is a unit in $RG$. Since, $g \in U(RG)$ and $RG$ is a $2-\sqrt{J}U$ ring, $g^2-1 \in \sqrt{J(RG)}$. Furthermore, we can also note that $g^2 -1 \in C(RG)$. Thus, from Proposition \ref{intersection of sqrt J(R) and C(R)}, 
      \begin{equation*}
          1-g^2 \in J(RG).
      \end{equation*}
      Now, as $2\in J(R)$, based on the definition of $J(R)$, we have, $2^2-1=3$ is a unit in $R$ and accordingly, $3 \in U(RG)$. While $RG$ is a $2-\sqrt{J}U$ ring, $3^2-1=2^3\in \sqrt{J(RG)}$. Hence, from Proposition \ref{basic ppties of sqrt J(R)} (2), $2 \in \sqrt{J(RG)}$ also. Thus, 2 is an element of $\sqrt{J(RG)}$ which is also central in $RG$. Hence, from Proposition \ref{intersection of sqrt J(R) and C(R)}, $2\in J(RG)$. This results in $2g^2 \in J(RG)$. Hence,
      \begin{equation*}
          1+g^2=(1-g^2)+2g^2 \in J(RG).
      \end{equation*}
      From the above developments, $1+g+g^2=g+(1+g^2) \in U(RG)+J(RG)$ and by using $U(RG)+J(RG) \subseteq U(RG)$, we obtain $1+g+g^2$ is a unit in $RG$.

      If $m=2$, we need to establish that $1+g+g^2+g^3+g^4$ is a unit in $RG$. As \begin{equation*}
          1+g+g^2+g^3+g^4=(1+g^2)(1+g)+g^4,
      \end{equation*}
      and $(1+g^2)(1+g) \in J(RG)$ and $g^4$ is a unit, $\sum_{i=0}^{i=4} g_i$ is a unit in $RG$. 

      In a similar manner, our claim can be proved true for every positive integer. As a result, the equation 
      \begin{equation*}
          g^{2n+1}-1=(g-1)(1+g+g + \dots + g^{2m})=0
      \end{equation*}
      giving $g$ as the identity element $1$. Hence, we have arrived at a contradiction, and $G$ is a 2-group. 
      
    \end{proof}
When every element of the group possesses a finite order, we refer to the group as a torsion group.
%When the order of every $a\in G$ is finite, the group $G$ is referred to as a torsion group.
   \begin{theorem}
       For a $2-\sqrt{J}U$ group ring $RG$, the group $G$ is always a torsion group.
   \end{theorem}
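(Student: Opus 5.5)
Let $g\in G$ be arbitrary. The plan is to show that $g$ has finite order, using the same centrality trick as in the preceding theorem. Since $g\in U(RG)$ and $RG$ is a $2-\sqrt{J}U$ ring, we have $g^2-1\in\sqrt{J(RG)}$. The element $g^2-1$ need not be central in $RG$ when $G$ is nonabelian. We therefore first pass to the subgroup $H=\langle g\rangle$. By the proposition on subgroups, $RH$ is again a $2-\sqrt{J}U$ ring, and $RH$ is commutative over the centre of $R$ in the $g$-variable, so $g^2-1$ commutes with every element of $RH$. Applying Proposition \ref{intersection of sqrt J(R) and C(R)} to $RH$ gives $g^2-1\in J(RH)$.

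Next, suppose for contradiction that $g$ has infinite order. Then $H\cong\mathbb{Z}$ and $RH\cong R[t,t^{-1}]$, the Laurent polynomial ring, with $g\mapsto t$. The key step is to show that $t^2-1$ cannot lie in $J(R[t,t^{-1}])$. First reduce modulo a maximal ideal. Take any maximal left ideal, or more simply pass to $\bar R=R/\mathfrak m$ for a maximal two-sided ideal $\mathfrak m$ of $R$; this is a simple ring. By Proposition \ref{homomorphism}, $\bar R[t,t^{-1}]$ is still $2-\sqrt{J}U$, so we get $t^2-1\in J(\bar R[t,t^{-1}])$. Then $1-(t^2-1)r$ is a unit for every $r$. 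Taking $r=1$ gives that $2-t^2$ is a unit in $\bar R[t,t^{-1}]$. Taking $r=-1$ gives that $t^2$ is a unit, which is harmless, so we instead pick $r=t^{k}$ to produce non-monomial elements.

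The units of a Laurent polynomial ring over a ring with no nilpotent coefficients (for example a reduced ring, or a division ring) are exactly the monomials $ut^k$ with $u$ a unit. Hence, if $\bar R$ can be chosen reduced, the unit $1-(t^2-1)t=1+t-t^3$ is not a monomial, which is a contradiction. To guarantee a reduced quotient, use Theorem \ref{R/I is a 2 sqrt J U ring} to replace $R$ by $R/J(R)$, and then quotient further by a suitable prime ideal. Alternatively, compare highest and lowest degree coefficients to get a direct contradiction. Since $1$ is not nilpotent in any nonzero ring, the top coefficient $-1$ of $1+t-t^3$ paired with the inverse's top coefficient must multiply to $0$ in a length argument unless the inverse's extremes are nilpotent. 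This shows the inverse has nilpotent leading coefficients, and an iteration then leads to contradiction.

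The main obstacle is exactly this step: ruling out that a non-monomial Laurent polynomial with coefficients $\pm1$ is a unit over an arbitrary (possibly noncommutative) $R$. I would first try to reduce to $\bar R$ a division ring via a maximal left ideal, but that is not a ring quotient. So the cleanest fallback is the degree argument: if $f=1+t-t^3$ has inverse $h$ with lowest term $at^p$ and highest term $bt^q$, then $-b=0$ from the top coefficient of $fh$ unless $q+3=0$, and $a=0$ from the bottom coefficient unless $p=0$. Forcing $h$ to be concentrated in the single degree $p=q$ with $p=0$ and $p=-3$ gives the contradiction. Once $g$ is shown to have finite order, $G$ is torsion.
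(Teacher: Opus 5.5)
Your proof is correct once it is reduced to its main line, and it differs from the paper's at the decisive step. Both arguments pass to $R\langle g\rangle$, where $g$ is central because elements of $G$ commute with $R$ in a group ring. Both then upgrade $g^2-1\in\sqrt{J}$ to $g^2-1\in J(R\langle g\rangle)$. The paper does this via $J(RG)\cap R\langle g\rangle\subseteq J(R\langle g\rangle)$ plus an inline proof that central elements of $\sqrt{J}$ lie in $J$. You do it via unit-closedness of $R\langle g\rangle$ in $RG$ and the cited proposition.

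The paper then writes $1-g+g^2=(1-g^2)+(2g^2-g)$ and places it in $J+U$. That requires $2g^2-g=g(2g-1)$ to be a unit, and nothing in the paper's argument justifies this; over $\mathbb{F}_3$, for instance, $2t-1=-(1+t)$ is not a unit of $\mathbb{F}_3[t,t^{-1}]$. Your unit $1-(g^2-1)g=1+g-g^3$ comes straight from the definition of the Jacobson radical. Your degree comparison works over every nonzero $R$, because the extreme coefficients $1$ and $-1$ are units, so no cancellation can occur in the extreme degrees of $fh$. Your route is therefore not only different but closes a hole in the paper's proof.

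You should cut the detours, which are unnecessary and partly false:
\begin{itemize}
\item The paper's homomorphic-image proposition fails in general: $\mathbb{Z}$ is $2$-$\sqrt{J}U$ but $\mathbb{Z}/7\mathbb{Z}$ is not. For your purpose you would only need that a surjection carries $J$ into $J$.
\item Units of a Laurent ring over a reduced ring need not be monomials. With $e=(1,0)\in\mathbb{F}_2\times\mathbb{F}_2$, the element $et+(1-e)$ has inverse $et^{-1}+(1-e)$.
\item Prime or simple quotients of noncommutative rings need not be reduced; over $M_2(k)$, $1+E_{12}t$ is a non-monomial unit.
\item The sentence about ``nilpotent leading coefficients'' followed by an ``iteration'' is not an argument.
\end{itemize}

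Finally, state the end of the degree argument cleanly. Let the lowest and highest nonzero terms of $h$ sit in degrees $p\le q$. The top coefficient of $fh$ (in degree $q+3$) forces $q=-3$, and the bottom one (in degree $p$) forces $p=0$, contradicting $p\le q$. There is no need to speak of concentrating $h$ in a single degree.
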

   \begin{proof}
       Proceeding by method of contradiction, assume that $G$ is not a torsion group and hence, an element $g$ exists in $G$ whose order is infinite. As $g\in RG$ is invertible in a $2- \sqrt{J}U$ ring $RG$, $g^2-1 \in \sqrt{J(RG)}$. Hence, for some positive integer $n$, $g^2-1 \in J(RG)$. Also, $g^-1 \in R<g>.$ Hence, $(g^2-1)^n \in J(RG) \cap R<g> \subseteq J(R<g>)$, which results in $g^2-1 \in \sqrt{J(R<g>)}$. It is clear that $g^2-1$ is a central element in $R<g>$. As $(g^2-1)^n \in J(R<g>)$, we have $1-r^n((g^2-1)^n)\in U(R<g>)$, for any $r$ in $R<g>$. Observe that $r (g^2-1)=(g^2-1)r$ and hence
       \begin{equation*}
           1-r^n(g^2-1)^n= (1-r(g^2-1))(1+r(g^2-1)+(r(g^2-1))^2+ \dots + (r(g^2-1))^{n-1}).
       \end{equation*}
       This gives $1-r(g^2-1) \in U(R<g>)$ and hence, $g^2-1 \in J(R<g>)$. Hence, we obtain that \begin{equation*}
           (1-g^2)+(2g^2-g) \in J(R<g>)+ U(R<g>)
       \end{equation*}
       and thus, we get $1-g+g^2$ as an invertible element. Thus, there exists non zero elements $a_n$ and $a_m$ in $R$ for some positive integers $n $ and $m$, where $n<m$ satisfying $(1-g+g^2) ( \sum_{i=1}^{m}a_ig^i)=1$, which is a contradiction. Hence, $G$ is a torsion group, as required. 
   \end{proof} 
   
When any subgroup generated by a finite number of elements is itself a finite group, the group is referred to as a locally finte group.

   \begin{theorem}
       In the group ring $RG$, for every unit $u$, $u^2 \in 1+\sqrt{J(R)}$ if it satisfies the following:
       \begin{enumerate}
           \item $R$ is a $2-\sqrt{J}U$ ring.
           \item $3 \in J(R)$.
           \item the group $G$ is a locally finite group $2$-group.
       \end{enumerate}
   \end{theorem}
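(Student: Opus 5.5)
The natural route is to pass through the augmentation map. Let $\varepsilon\colon RG\to R$, $\sum a_g g\mapsto \sum a_g$, be the augmentation homomorphism, with kernel the augmentation ideal $\Delta(G)$, generated as a left ideal by the elements $g-1$, $g\in G$. Since $RG/\Delta(G)\cong R$, Theorem \ref{R/I is a 2 sqrt J U ring} shows that $RG$ is a $2-\sqrt{J}U$ ring (so that $u^2\in 1+\sqrt{J(RG)}$ for every $u\in U(RG)$, which is how I read the conclusion) as soon as
\begin{equation*}
\Delta(G)\subseteq J(RG).
\end{equation*}
The whole proof therefore reduces to this inclusion, and hypotheses (2) and (3) are there to secure it.

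For the inclusion I would first reduce to finite groups. An element of $\Delta(G)$ involves only finitely many group elements, and these generate a finite $2$-subgroup $H$ because $G$ is locally finite. The group ring $RG$ is a free, hence faithfully flat, module over $RH$. The standard transfer results then give $J(RH)=J(RG)\cap RH$ for finite $H$ (as in Connell's work). So it suffices to prove $\Delta(H)\subseteq J(RH)$ for a finite $2$-group $H$.

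For finite $H$ I would induct on $|H|$. Choose a central element $z$ of order $2$. Then $(z-1)^2=-2(z-1)$, so $(z-1)^{2^k}\in 2^{2^k-1}RH$. Once $2$ is quasi-regular in the appropriate sense, $z-1$ generates an ideal contained in $J(RH)$. One then passes to $R[H/\langle z\rangle]$, whose augmentation ideal lies in its radical by induction, and lifts back using that $J$ pulls back along surjections with kernel inside $J$.

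The main obstacle is exactly this step: the identity for $(z-1)^2$ only helps if $2\in J(R)$, i.e.\ if the characteristic of $R/J(R)$ matches the prime of the group. With hypothesis (2) as literally stated, $3\in J(R)$, we get $2=3-1\in U(R)$. Then $\tfrac12(1+z)$ is a nontrivial central idempotent, $\Delta(H)\not\subseteq J(RH)$, and the argument breaks down. In fact $\mathbb{F}_3C_4\cong \mathbb{F}_3\times\mathbb{F}_3\times\mathbb{F}_9$ is not $2-\sqrt{J}U$ by Theorem \ref{division ring which is 2 sqrt J U} and Proposition \ref{product in 2 sqrt J U rings}, even though $\mathbb{F}_3$ satisfies (1) and (2) and $C_4$ satisfies (3). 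So the statement is false under hypothesis (2) as written.

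I would therefore carry out the plan with (2) read as $2\in J(R)$, matching the earlier theorem relating $2\in J(R)$ to $2$-groups. Under that reading every step above goes through and yields $u^2\in 1+\sqrt{J(RG)}$.
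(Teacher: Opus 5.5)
\textbf{Verdict.} Your conclusion is correct: the statement is false as written, your example settles it, and the paper's proof is wrong at exactly the point your example hits.

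\textbf{The counterexample.} Take $R=\mathbb{F}_3$, which is $2-\sqrt{J}U$ and has $3=0\in J(R)$, and $G=C_4=\langle g\rangle$. Since $x^2+1$ is irreducible over $\mathbb{F}_3$,
$RG\cong\mathbb{F}_3[x]/(x^4-1)\cong\mathbb{F}_3\times\mathbb{F}_3\times\mathbb{F}_9$.
The image of $g$ in $\mathbb{F}_9$ is a square root of $-1$, so $g^2-1$ maps to $-2\neq 0$. Because $RG$ is reduced with $J(RG)=0$, this gives $g^2\notin 1+\sqrt{J(RG)}$. A fortiori $g^2\notin 1+\sqrt{J(R)}$, so the literal reading fails too.

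\textbf{Where the paper's proof breaks.} The paper argues differently from you. It chooses a finite subgroup $H$ with $f\in U(RH)$ and asserts $H\cong\bigoplus_{i=1}^n C_2$. It then concludes that $RH$ is a direct product of copies of $R$, hence $2-\sqrt{J}U$ by the product proposition. That splitting needs $2\in U(R)$, which is what $3\in J(R)$ supplies, since $1-3=-2\in U(R)$. But a finite $2$-group need not have exponent $2$, and $H=C_4$ produces exactly your factor $\mathbb{F}_9$. The paper's last step is also wrong on its own terms: from Connell's inclusion $J(RG)\cap RH\subseteq J(RH)$ it infers $J(RH)\subseteq J(RG)$, which is the opposite inclusion.

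\textbf{Two possible repairs.} Since the paper uses (2) to make $2$ invertible, (2) is not clearly a misprint for $2\in J(R)$.
\begin{itemize}
\item The repair that fits the paper's route keeps $2\in U(R)$ and requires $G$ to have exponent $2$. Then $RK\cong R^{|K|}$ for every finite $K\le G$. Passing to $RG$ is legitimate because $J(RK)\subseteq J(RL)$ for finite $K\le L$. Indeed, $RL$ is a finite free $RK$-module on group elements that centralize $RK$. So for any simple $RL$-module $M$, $MJ(RK)$ is an $RL$-submodule, proper by Nakayama, hence zero.
\item Your repair, $2\in J(R)$, goes through $\Delta(G)\subseteq J(RG)$ and the quotient theorem for ideals inside $J$. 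It covers every locally finite $2$-group, and your induction on a central involution is sound.
\end{itemize}

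\textbf{One step in your sketch to fix.} The equality $J(RH)=J(RG)\cap RH$ is not a general consequence of $RG$ being free over $RH$. For example, $1+t\in J(\mathbb{F}_2C_2)$ but $1+t\notin J(\mathbb{F}_2S_3)$ when $t$ is a transposition. You do not need it. For $x\in\Delta(H)$ and $r\in RG$, take a finite $K\le G$ containing $H$ and the support of $r$. Then $x\in\Delta(K)\subseteq J(RK)$ by the finite case, so $1-rx\in U(RK)\subseteq U(RG)$.
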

   \begin{proof}
       Let $f$ be a unit in $RG$. There will exist a finite subgroup $H$ of $G$ in which $f$ is a unit. As $G$ is a $2$-group, $H=\oplus_{i=1}^{n}C_2$ and hence, $R = \oplus_{i=1}^{2n} R$, which is a $2-\sqrt{J}U$ ring from Proposition \ref{product in 2 sqrt J U rings}. Hence, $f^2=1+z$, $z\in \sqrt{J(RH)}$. Hence, for some positive integer $m$, $(f^2-1)^m \in J(RH)$. Next, by \cite[Proposition 9]{Connel}, as $J(RG) \cap RH \subseteq J(RH)$, we have $(f^2-1)^m \in J(RH) \subseteq J(RG)$. This resulta in $f^2-1 \in \sqrt{J(RG)}$, as required.
   \end{proof}

\end{document}